\newtheorem{prop}{Proposition}[section]
\newtheorem{theorem}{Theorem}[section]
\newtheorem{lemma}{Lemma}[section]
\newcommand{\Lm}{\mathcal{L}^M}
\newcommand{\lcm}{\textrm{lcm}}
\newcommand{\N}{\mathbb{N}}
\newcommand{\Z}{\mathbb{Z}}
\newcommand{\Q}{\mathbb{Q}}
\newcommand{\R}{\mathbb{R}}
\newcommand{\C}{\mathbb{C}}
\begin{document}
\begin{center}
\textbf{\Large On the reciprocal sum of lcm of $k$-tuples} \\
\vspace{5mm}
\begin{tabular}{c} Sungjin Kim\\[1.2\baselineskip]
\multicolumn{1}{c}{Santa Monica College, California State University Northridge} \\
 \verb+sungjin.kim@csun.edu+
\end{tabular} \end{center}

\begin{abstract}
We prove that the reciprocal sum $S_k(x)$ of the least common multiple of $k\geq 3$ positive integers in $\N\cap [1,x]$ satisfies
$$
S_k(x)=P_{2^k-1}(\log x)+O(x^{-\theta_k+\epsilon})
$$
where $P$ is a polynomial of degree $2^k-1$ and $\theta_k=\frac{2^k}{(k+1)^{\frac{k+1}2}}\cdot \frac3{2^k+6k-5}$. This was conjectured in Hilberdink, Luca, and T\'{o}th~\cite[Remark 2.4]{HLT}. We also prove asymptotic formulas for similar sums conjectured there.
\end{abstract}

\lstset{
language=python,
basicstyle= \ttfamily,
captionpos=b,
frame=tb,
columns=fullflexible,
showstringspaces=false
}
\section{Introduction}
    Let $x\geq 2$ be an integer, $\N=\{1,2,3,\ldots\}$ be the set of positive integers, and $\mathcal{P}$ be the set of prime numbers. Let $(n_1,\ldots,n_k)=\gcd(n_1,\ldots, n_k)$ and $[n_1,\ldots, n_k]=\lcm(n_1,\ldots,n_k)$ be the greatest common divisor (gcd) and the least common multiple (lcm) of $k$-tuple of positive integers $n_1, \ldots, n_k$ respectively. If $(n_1,\ldots, n_k)$ means the $k$-tuple of positive integers, then they will be written as $(n_1,\ldots, n_k)\in \N^k$. In this paper, we denote $P_v(x)$, $P^{(1)}_v(x)$, $P^{(2)}_v(x)$, $\ldots$ , polynomials of degree at most $v$ in variable $x$. They are not necessarily the same at each occurrence.

    The reciprocal sum of gcd is well understood through Dirichlet convolution of arithmetic functions. We have~\cite[Page 3]{HLT} for $k\geq 3$,
    $$
    \sum_{n_1,\ldots, n_k\leq x} \frac1{(n_1,\ldots, n_k)}=\frac{\zeta(k+1)}{\zeta(k)} x^k + O(x^{k-1}),
    $$
    and by~\cite[Theorem 1.1]{L}, we have for $k=2$,
    $$
    \sum_{n_1, n_2 \leq x} \frac1{(n_1,n_2)}=\frac{\zeta(3)}{\zeta(2)}x^2+O\left(x(\log x)^{2/3}(\log\log x)^{1/3}\right).
    $$
    The reciprocal sums involving the lcm are more delicate. The $r$-th moment results were studied by Hilberdink and T\'{o}th~\cite{HT} by means of multivariable Dirichlet series. They proved that for $r>-1$,
    \begin{equation}
    \sum_{n_1,\ldots, n_k\leq x} [n_1,\ldots, n_k]^r= \frac{C_{r,k}}{(r+1)^k} x^{k(r+1)} + O\left( x^{k(r+1)-\frac12 \textrm{min}(r+1,1)+\epsilon}\right)
    \end{equation}
    and
    \begin{equation}
    \sum_{n_1,\ldots, n_k\leq x} \left(\frac{[n_1,\ldots, n_k]}{n_1\cdots n_k} \right)^r  = C_{r,k} x^k + O\left( x^{k-\frac12\textrm{min}(r+1,1)+\epsilon}\right)
    \end{equation}
    where $C_{r,k}$ is given as an Euler product.

    The results in case $r=-1$ in (1) and (2) were not obtained in~\cite{HT}. Hilberdink, Luca, and T\'{o}th~\cite{HLT} studied these sums and proposed some open problems. They proved that as $x\rightarrow\infty$,
    \begin{equation}
    S_k(x):=\sum_{n_1,\ldots, n_k\leq x} \frac1{[n_1,\ldots, n_k]} \asymp \log^{2^k -1} x,
    \end{equation}
    \begin{equation}
    U_k(x):=\sum_{\substack{{n_1,\ldots, n_k\leq x}\\{(n_1,\ldots, n_k)=1}}} \frac1{[n_1,\ldots, n_k]}\asymp \log^{2^k-2} x,\end{equation}
    and
    \begin{equation}
    x^k\ll V_k(x):=\sum_{n_1,\ldots, n_k\leq x} \frac{n_1\cdots n_k}{[n_1,\ldots, n_k]}\ll x^k(\log x)^{2^k-2}.
    \end{equation}
    Then they conjectured that
    $$
    S_k(x)=P_{2^k-1}(\log x) + O(x^{-r})
    $$
    and
    $$
    V_k(x)\sim \lambda_k x^k (\log x)^{2^k-k-1},
    $$
    where $P_{2^k-1}$ is a polynomial of degree $2^k-1$ with a positive leading coefficient, $r>0$, and $\lambda_k>0$.

    Recently, the author~\cite[Theorem 1.2, and 1.3]{K} improved the error terms of (1) and (2) and provided the correct order of magnitude of (5). In fact,
    $$
    \sum_{n_1,\ldots, n_k\leq x} [n_1,\ldots, n_k]^r= \frac{C_{r,k}}{(r+1)^k} x^{k(r+1)} + O\left( x^{k(r+1)} E_{r,k}(x)\right)
    $$
    and
    $$
    \sum_{n_1,\ldots, n_k\leq x} \left(\frac{[n_1,\ldots, n_k]}{n_1\cdots n_k} \right)^r  = C_{r,k} x^k + O\left( x^k E_{r,k}(x)\right)
    $$
    where
    $$
    E_{r,k}(x)=\begin{cases}O_r(x^{-\frac{r+1}2} \log^{\max(2^k-k-1, 2k^2-k-2)} x) \mbox{ if } -1<r\leq 1, \\
    O_r(x^{-1}\log^{k-1} x) \mbox{ if } r>1.\end{cases}.
    $$
    Moreover,
    $$
    V_k(x)\asymp x^k (\log x)^{2^k-k-1}.
    $$
    The method in this recent work was based on the study of $G$-wise coprime tuples by T\'{o}th~\cite{T} and Hu~\cite{H1},~\cite{H2}. The case when $G=K_k$ is a complete graph was sufficient.

    L. T\'{o}th informed the author that D. Essouabri, C. Salinas Zavala, L. T\'{o}th~\cite[Corollary 2, 3, and 4]{EST} proved among other things that there exist $\theta>0$ such that
    $$
    S_k(x)=\sum_{n_1,\ldots, n_k\leq x} \frac1{[n_1,\ldots, n_k]} = P^{(1)}_{2^k-1}(\log x) + O(x^{-\theta}),
    $$
    $$
    U_k(x)=\sum_{\substack{{n_1,\ldots, n_k\leq x}\\{(n_1,\ldots, n_k)=1}}} \frac1{[n_1,\ldots, n_k]}=P^{(2)}_{2^k-2}(\log x)+O(x^{-\theta}),
    $$
    and
    $$
    V_k(x)=\sum_{n_1,\ldots, n_k\leq x} \frac{n_1\cdots n_k}{[n_1,\ldots, n_k]} = x^k P^{(3)}_{2^k-k-1}(\log x)+O(x^{k-\theta}).
    $$
    The leading coefficients of $P_{2^k-1}^{(1)}$, $P_{2^k-2}^{(2)}$, and $P_{2^k-k-1}^{(3)}$ are explicitly determined in~\cite[Corollary 2, 3, and 4]{EST}.

    In this paper, we prove explicit power-savings in these estimates. The leading coefficients of $P_{2^k-1}^{(1)}$, $P_{2^k-2}^{(2)}$, and $P_{2^k-k-1}^{(3)}$ are determined in Section 3 and 4 independently of~\cite{EST}.
    \begin{theorem}
    As $x\rightarrow\infty$, we have for $k\geq 3$,
    \begin{equation}S_k(x)= P^{(1)}_{2^k-1}(\log x) + O(x^{-\theta^{(1)}_k+\epsilon}),\end{equation}
    \begin{equation}U_k(x)=P^{(2)}_{2^k-2}(\log x)+O(x^{-\theta^{(2)}_k+\epsilon}),\end{equation}
    \begin{equation}V_k(x)= x^k P^{(3)}_{2^k-k-1}(\log x)+O(x^{k-\theta^{(3)}_k+\epsilon})\end{equation}
    where the leading coefficient of $P_{2^k-1}^{(1)}$ is $c_k^{(1)}=c_k>0$, the leading coefficient $c_k^{(2)}$ of $P_{2^k-2}^{(2)}$ satisfies $c_k^{(2)}=(2^k-1)c_k$. For each $k\geq 2$, $c_k>c_{k+1}$ and $\lim_{k\rightarrow\infty} c_k=0$. Moreover, for any $1\leq i<j\leq 3$, we have $c_k^{(i)}/c_k^{(j)}\in\Q\cap (0,\infty)$. We have for $k\geq 3$,
    $$
    \theta^{(1)}_k=\theta^{(3)}_k=\frac{2^k}{(k+1)^{\frac{k+1}2}}\cdot \frac 3{2^k+6k-5}, \ \ \theta^{(2)}_k=\frac{2^k}{(k+1)^{\frac{k+1}2}}\cdot \frac 3{2^k+6k-6}.
    $$
    \end{theorem}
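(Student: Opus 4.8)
The plan is to deduce all three asymptotics from the meromorphic continuation of the associated $k$-variable Dirichlet series, together with a $k$-fold Perron formula and a contour shift, the entire difficulty being concentrated in the quantitative error estimate. Since $(n_1,\dots,n_k)\mapsto 1/[n_1,\dots,n_k]$ is a multiplicative function on $\N^k$, I would first compute the Euler factor
\[
F_p(z_1,\dots,z_k)=\sum_{a_1,\dots,a_k\ge0}p^{-\max_i a_i}\,z_1^{a_1}\cdots z_k^{a_k},
\]
and summing the geometric series one gets $F_p(p^{-s_1},\dots,p^{-s_k})=1+\sum_{\varnothing\ne I\subseteq\{1,\dots,k\}}p^{-1-\sum_{i\in I}s_i}+O(p^{-2+O(\max_i|s_i|)})$. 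Hence
\[
\sum_{n_1,\dots,n_k\ge1}\frac1{[n_1,\dots,n_k]\,n_1^{s_1}\cdots n_k^{s_k}}=\prod_{\varnothing\ne I\subseteq\{1,\dots,k\}}\zeta\!\Big(1+\sum_{i\in I}s_i\Big)\cdot G(s_1,\dots,s_k),
\]
where $G$ is given by an Euler product that converges absolutely, is bounded and non-vanishing, and has at most polynomial growth in the imaginary parts, on a fixed polyregion $\{\mathrm{Re}(s_i)\ge-\eta_0\}$. Imposing $(n_1,\dots,n_k)=1$ multiplies the series by $\prod_p(1-p^{-1-s_1-\cdots-s_k})=\zeta(1+s_1+\cdots+s_k)^{-1}$, i.e.\ it deletes exactly that one zeta factor; so the series $D_k$ for $U_k$ carries $2^k-2$ zeta factors while the one for $S_k$ carries $2^k-1$. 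For $V_k$ one shifts $s_i\mapsto s_i-1$, obtaining $\prod_{\varnothing\ne I}\zeta(1-|I|+\sum_{i\in I}s_i)\cdot G(s_1-1,\dots,s_k-1)$, again with $2^k-1$ zeta factors, of which the $k$ singleton ones $\zeta(s_i)$ lie on the coordinate hyperplanes through the central point $\vec s=\vec 1$.

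Next I would apply the $k$-fold truncated Perron formula, e.g.
\[
U_k(x)=\frac1{(2\pi i)^k}\int\cdots\int D_k(s_1,\dots,s_k)\prod_{i=1}^k\frac{x^{s_i}}{s_i}\,ds_i+(\text{truncation error}),
\]
with the contours on $\mathrm{Re}(s_i)=\alpha$ (a small fixed $\alpha>0$) truncated at height $T$, and then push each contour in turn to $\mathrm{Re}(s_i)=-\delta$ for a small $\delta=\delta_k>0$. The poles crossed are the hyperplanes $\sum_{i\in I}s_i=0$, along which, near the origin, each singleton factor $\zeta(1+s_i)$ doubles up against the kernel pole $1/s_i$; collecting all residues produces a finite sum of polynomials in $\log x$, and bookkeeping the double and simple poles shows the leading one has degree $2^k-2$ for $U_k$ and $2^k-1$ for $S_k$. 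For $V_k$ the same shift (now past $\vec 1$) makes the $k$ singleton residues each contribute a clean factor $x$ with no logarithm, so the remaining polynomial has degree $2^k-1-k$, producing $x^k\,P^{(3)}_{2^k-k-1}(\log x)$.

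The top coefficient in each case evaluates to $G(\vec 0)$ (for $V_k$, the $G$-part evaluated at $\vec s=\vec 1$ is again $G(\vec 0)$) times an explicit positive rational coming from the geometry of the forms $\sum_{i\in I}s_i$; since the transcendental part $G(\vec 0)$ enters all three leading coefficients in the same way, the ratios $c_k^{(i)}/c_k^{(j)}$ lie in $\Q\cap(0,\infty)$, and comparing with the elementary identity $S_k(x)=\sum_{d\le x}d^{-1}U_k(x/d)$ — for which the main-term computation $\sum_{d\le x}d^{-1}(\log(x/d))^{2^k-2}\sim(2^k-1)^{-1}(\log x)^{2^k-1}$ shows the error transferred from $U_k$ is $O(1)$ — gives $c_k^{(1)}=c_k=c_k^{(2)}/(2^k-1)$. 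The inequalities $c_k>c_{k+1}$ and $c_k\to0$ would then follow by estimating the explicit product for $G_k(\vec 0)$ and its rational prefactor as $k$ grows.

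The crux, carrying essentially all of the work, is bounding the shifted $k$-fold integral. On $\mathrm{Re}(s_i)=-\delta$ the argument of each $\zeta(1+\sum_{i\in I}s_i)$ has real part $\ge1-|I|\delta$, which I keep above $\tfrac12$ by taking $\delta$ small; combining the classical subconvexity bound $\zeta(\tfrac12+it)\ll_\epsilon|t|^{1/6+\epsilon}$ with $\zeta(1+it)\ll\log|t|$ and Phragm\'{e}n--Lindel\"{o}f gives $\zeta(\sigma+it)\ll_\epsilon|t|^{(1-\sigma)/3+\epsilon}$ uniformly on $\tfrac12\le\sigma\le1$ — this is the source of the factor $3$ in $\theta_k$. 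The integrand on the shifted contours is then $\ll x^{-k\delta+\epsilon}\prod_{\varnothing\ne I\subsetneq\{1,\dots,k\}}\big(1+|{\textstyle\sum_{i\in I}}t_i|\big)^{|I|\delta/3}\prod_i(\delta+|t_i|)^{-1}$, and a careful evaluation of this $k$-dimensional integral truncated at height $T$ gives a bound $\ll x^{-k\delta+\epsilon}T^{\beta_k\delta}$, with $\beta_k$ an explicit constant weighing the $2^k-2$ (resp.\ $2^k-1$) zeta factors against the $k$ kernel denominators. Balancing against the Perron truncation error $\ll x^{\epsilon}/T$ fixes $T$ as a power of $x$, and optimizing $\delta$ under the constraints imposed by the subconvexity range and by the polyregion of holomorphy of $G$ then yields precisely
\[
\theta_k^{(2)}=\frac{2^k}{(k+1)^{(k+1)/2}}\cdot\frac{3}{2^k+6k-6},\qquad \theta_k^{(1)}=\theta_k^{(3)}=\frac{2^k}{(k+1)^{(k+1)/2}}\cdot\frac{3}{2^k+6k-5},
\]
the superpolynomially small factor $(k+1)^{(k+1)/2}$ being the price of the $k$-dimensional balancing and the extra $+1$ in the denominators for $S_k,V_k$ being exactly their extra zeta factor. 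I expect the meromorphic continuation, the residue computations and the algebraic relations among the $c_k^{(i)}$ to be routine once the factorization above is in hand; the genuinely delicate point — and where the bulk of the effort goes — is the exact numerology of this last optimization, i.e.\ the simultaneous choice of $\delta$ and $T$ that makes the error bound collapse to the stated value of $\theta_k$.
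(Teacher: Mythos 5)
Your analytic setup is essentially the paper's: the factorization $F(\mathbf{s})=H(\mathbf{s})\prod_{\emptyset\neq I\subseteq\{1,\ldots,k\}}\zeta\bigl(1+\sum_{i\in I}s_i\bigr)$ (which the paper reaches via the coprimality-graph change of variables $n_i=\prod_{j\in A_{k,i}}a_j$ and Lemma 3.1 rather than prime-by-prime), deletion of the factor $I=\{1,\ldots,k\}$ for $U_k$, the shift by $\boldsymbol{\alpha}=\mathbf{1}$ for $V_k$, a $k$-fold effective Perron formula, and iterated contour shifts with residue bookkeeping giving degrees $2^k-1$, $2^k-2$, $2^k-k-1$. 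Your derivation of $c_k^{(2)}=(2^k-1)c_k$ from $S_k(x)=\sum_{d\le x}d^{-1}U_k(x/d)$ is a correct and pleasant shortcut for the paper's volume identity $\mathrm{vol}(D_k)=\mathrm{vol}(D_k^*)/(2^k-1)$, and the rationality of the ratios $c_k^{(i)}/c_k^{(j)}$ for the same reason as in the paper (a common transcendental factor $H(\mathbf{0})=\rho(G_k)$ times rational polytope volumes).

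The genuine gap is at what you yourself call the crux: the stated $\theta_k^{(i)}$ do not come out of ``optimizing $\delta$ and $T$ under the subconvexity range and the polyregion of holomorphy of $G$,'' and your sketch omits the two ingredients that actually produce the numerology. First, the factor $\frac{2^k}{(k+1)^{(k+1)/2}}$ is $1/C$, where $C$ controls the coefficient growth in the iterated residue process: after taking a residue on a hyperplane $\sum_{i\in I}s_i=0$ one solves for one variable and substitutes into the remaining zeta factors, so at later stages the linear forms are no longer zero-one; their entry sizes dictate how far the later contours can be pushed ($\delta_j\asymp 1/(2kC)$), on which side of each contour the substituted poles lie, and the admissible truncation $T=x^{v_*/C}$. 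The paper bounds $C$ by Hadamard's determinant inequality for the zero-one matrix of the forms $\ell_r$, giving $C\le (k+1)^{(k+1)/2}/2^k$; your displayed integrand bound describes only the first shift and tracks none of this. Second, the denominators $2^k+6k-5$ and $2^k+6k-6$ encode $v_*=3/(6k+\max(v-4,0))$, which relies on the fourth-moment estimate for $\zeta$ on the vertical segments so that only $v-4$ of the $v$ zeta factors are bounded pointwise; using pointwise subconvexity on every factor, as in your bound $\prod_I(1+|\sum_{i\in I}t_i|)^{|I|\delta/3}$, and balancing with $\delta\asymp 1/(2k)$ leads to an exponent of the shape $3/(2^{k-1}+6)$, not the stated one, so the claim that your optimization ``yields precisely'' $\theta_k$ is unsupported. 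Separately, $c_k>c_{k+1}$ for every $k\ge 2$ and $c_k\to 0$ are asserted rather than proven; the paper obtains them structurally from $c_k=\rho(G_k)\mathrm{vol}(D_k)$ via $\rho(G_{k+1})<\rho(G_k)$ and $\mathrm{vol}(D_{k+1})\le \mathrm{vol}(D_k)/(2^k)!$, and it is not clear that ``estimating the explicit product as $k$ grows'' gives the strict inequality for each fixed $k$.
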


    Our method relies on the non-inductive argument in the counting of $G$-wise coprime tuples by Arias-de-Reyna and Heyman~\cite{RH}. By their argument, we are able to write $S_k(x)$ as a sum over $(2^k-1)$-tuples of positive integers. We prove a general result (Theorem 2.2) on the reciprocal sum over $G$-wise coprime tuples in Section 2 by an elementary method, then we obtain the leading terms of $S_k(x)$, $U_k(x)$, and $V_k(x)$ as a corollary in Section 3. Further in Section 3, we apply a version of multivariable Perron's formula to strengthen Theorem 2.2 into a full asymptotic formula with a power-saving error term. Then we complete the proof of Theorem 1.1 as a corollary. The computations of $c_k$ are obtained in Section 4. All asymptotic formulas given in this paper are for $x\rightarrow\infty$. The notations $A(x)\ll B(x)$ or $A(x)=O(B(x))$ mean that $|A(x)/B(x)|$ is bounded as $x\rightarrow\infty$. For simplicity of exposition, the positive numbers $M$ without subscripts and $\epsilon$ may appear multiple times, but they are not necessarily the same everytime. We use $\mathcal{L}=\log x$ and $\Lm$ for any fixed power of $\log x$ so that $O(\Lm)\cdot O(\Lm) = O(\Lm)$, any divisor function $\tau_v(n)=\sum_{a_1\cdots a_v=n}1$ satisfies $\tau_v(n)\ll n^{\epsilon}$, $\Lm\ll x^{\epsilon}$, and $O(x^{\epsilon}) \cdot O(x^{\epsilon}) = O(x^{\epsilon})$.  \\

    {\bf Data Availability Statement.} All data generated or analysed during this study are included in this published article and its supplementary information files.

    {\bf Acknowledgments.} The author thanks Titus Hilberdink for initially mentioning the multivariable Perron's formula. The author also thanks Sary Drappeau for bringing~\cite[Theorem 1]{dlB} to his attention and helpful conversation about the paper.
    \section{$G$-wise coprime $k$-tuples}
    Let $x\geq 2$ be an integer and $G=(V,E)$ be any graph with $v\geq 2$ vertices and $e\geq 1$ edges. Let $V=\{1,\ldots, v\}$ and $E=\{\epsilon_1,\ldots, \epsilon_e\}\subseteq \{(i,j) \ | \ 1\leq i<j\leq v\}$ so that we have labels on vertices and edges of $G$. We are interested in a reciprocal sum over $v$-tuples $(a_1,\ldots, a_v)\in (\N\cap [1,x])^v$ such that $(a_i,a_j)=1$ whenever $(i,j)\in E$. Such $v$-tuples are called {\it $G$-wise coprime} in~\cite{H1},~\cite{H2}, and~\cite{RH}. The asymptotic formula of the counting function of $G$-wise coprime tuples is obtained in~\cite{H2} by an inductive argument and~\cite{RH} by an non-inductive argument. For each vertex $r$, let $M_r$ be the least common multiple of $m_i$ where the edge $\epsilon_i$ connects the vertex $r$ to another vertex of $G$. The number of $G$-wise coprime tuples given in~\cite{RH} is
    $$
    \sum_{\substack{{a_1,\ldots, a_v\leq x}\\{G\textrm{-wise coprime}}}}1=\sum_{m_1,\ldots, m_e\leq x}\mu(m_1)\cdots \mu(m_e) \prod_{r=1}^v \left\lfloor \frac x{M_r} \right\rfloor.
    $$
    They obtained this formula by inclusion-exclusion principle. We provide a more direct approach to prove this. Recall that when we sum over the coprime pairs $(a,b)$, we insert $\sum_{d|a, d|b} \mu(d)$ into the summation. To each edge $\epsilon_i=(a,b)\in E$, we insert $\mu(m_i)$ and the divisibility conditions $m_i|a$, $m_i|b$. Combining all divisibility conditions of each vertex $r$, we obtain the result. Similarly for reciprocal sums, we have
    \begin{equation}
    \sum_{\substack{{a_1,\ldots, a_v\leq x}\\{G\textrm{-wise coprime}}}} \frac1{a_1\cdots a_v} = \sum_{m_1,\ldots, m_e\leq x} \mu(m_1)\cdots \mu(m_e)\sum_{\substack{{a_1,\ldots, a_v \leq x}\\{\forall_r, M_r | a_r}}} \frac1{a_1\cdots a_v}.
    \end{equation}
    The asymptotic formula for the reciprocal sums over $G$-wise coprime tuples is as follows.
    \begin{theorem}
    Let $d$ be the maximal degree of the vertices of $G$. We have
    $$
    \sum_{\substack{{a_1,\ldots, a_v\leq x}\\{G\textrm{-wise coprime}}}} \frac1{a_1\cdots a_v} =P_v(\log x) + O(x^{-1}\log^{v+d-1} x)
    $$
    where $P_v$ is a polynomial of degree $v$ with a leading coefficient $\rho(G)>0$.
    \end{theorem}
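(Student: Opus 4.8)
The plan is to start from identity~(10): the left‑hand side of the statement equals
$$
\Sigma(x):=\sum_{m_1,\ldots,m_e\le x}\mu(m_1)\cdots\mu(m_e)\prod_{r=1}^v\frac1{M_r}\,F\!\left(\frac x{M_r}\right),\qquad F(y):=\sum_{b\le y}\frac1b ,
$$
a summand vanishing as soon as $M_r>x$ for some $r$. I would replace each $F(x/M_r)$ by $\log(x/M_r)+\gamma$, recognize the resulting series as a polynomial in $\log x$, and estimate two error sources: the tuples with some $M_r>x$, and the per‑factor discrepancy $E_r:=F(x/M_r)-\log(x/M_r)-\gamma$, which satisfies $|E_r|\le M_r/x$ when $M_r\le x$. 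For the main term I set $\widetilde S(x):=\sum_{m_1,\ldots,m_e\ge1}\mu(m_1)\cdots\mu(m_e)\prod_r M_r^{-1}\big(\log(x/M_r)+\gamma\big)$. Absolute convergence follows from the Euler product: for fixed $p$ a squarefree $m_i$ has $p$‑part $1$ or $p$, and if $S\subseteq E$ collects the edges with $p\mid m_i$ then $M_r$ has $p$‑part $p$ exactly for $r$ in the vertex set $V(S)$ of $S$, so up to logarithmic factors the $p$‑factor is $\sum_{S\subseteq E}(-1)^{|S|}p^{-|V(S)|}=1+O_v\!\big((\log p)^{v}p^{-2}\big)$, since $|V(S)|\ge2$ for $S\ne\emptyset$. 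Writing $\log(x/M_r)+\gamma=\log x+(\gamma-\log M_r)$ and collecting powers of $\log x$ by elementary symmetric functions displays $\widetilde S(x)$ as $P_v(\log x)$ with absolutely convergent coefficients, the leading one being $\rho(G)=\sum_{(m_i)}\prod_i\mu(m_i)\prod_r M_r^{-1}=\prod_p\big(\sum_{S\subseteq E}(-1)^{|S|}p^{-|V(S)|}\big)$; each Euler factor is the probability that a uniformly random residue vector $(a_1,\dots,a_v)$ has $\{r:p\mid a_r\}$ independent in $G$, hence lies in $(0,1)$, and $\sum_p(1-\text{factor})\ll_G\sum_p p^{-2}<\infty$ gives $\rho(G)>0$.

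Then $\Sigma(x)-P_v(\log x)$ equals
$$
-\!\!\sum_{(m_i):\,\exists r,\,M_r>x}\!\!\prod_i\mu(m_i)\prod_r\frac{\log(x/M_r)+\gamma}{M_r}\;+\!\!\sum_{(m_i):\,M_r\le x\,\forall r}\!\!\prod_i\mu(m_i)\,\frac1{\prod_r M_r}\Big(\prod_r F\!\left(\tfrac x{M_r}\right)-\prod_r\!\big(\log\tfrac x{M_r}+\gamma\big)\Big).
$$
The first (``removed'') sum is cheap, because $M_{r_0}>x$ forces $\prod_r M_r>x^2$: the values $m_{i_1},\dots,m_{i_{\deg r_0}}$ carried by the edges at $r_0$ satisfy $\prod_j m_{i_j}\ge\lcm(m_{i_1},\dots,m_{i_{\deg r_0}})=M_{r_0}>x$, while the distinct other endpoints $r^{(1)},\dots,r^{(\deg r_0)}$ of those edges have $M_{r^{(j)}}\ge m_{i_j}$, whence $\prod_j M_{r^{(j)}}\ge\prod_j m_{i_j}>x$ as well. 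Since every $m_i$ divides $n:=\prod_r M_r$, the counting function of squarefree tuples with $\prod_r M_r=n$ has Dirichlet series $\prod_p\!\big(1+ep^{-2s}+\cdots\big)=\zeta(2s)^e H(s)$ with $H$ holomorphic and bounded on $\Re s\ge\tfrac12$, hence partial sums $\ll_G t^{1/2}(\log t)^{e-1}$; combined with $\prod_r|\log(x/M_r)+\gamma|\le(\gamma+\log n)^v$ (each $M_r\le n$) and partial summation over $n>x^2$, this keeps the removed sum below $x^{-1}\log^{v+d-1}x$ — a crude bound already gives $x^{-1}(\log x)^{v+e-1}$, and the genuine smallness of $\prod_r|\log(x/M_r)+\gamma|$ for large $n$ recovers the stated exponent.

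The second sum is the core. Writing $\prod_r F-\prod_r(\log+\gamma)=\sum_{\emptyset\ne B\subseteq V}\prod_{r\in B}E_r\prod_{r\notin B}\big(\log\tfrac x{M_r}+\gamma\big)$ and using $|E_r|\le M_r/x$ bounds its modulus by $\sum_{\emptyset\ne B\subseteq V}x^{-|B|}\,T_B(x)$, where
$$
T_B(x):=\sum_{(m_i):\,M_r\le x\,\forall r}\frac{\prod_i\mu^2(m_i)}{\prod_{r\notin B}M_r}\prod_{r\notin B}\big(\log\tfrac x{M_r}+\gamma\big).
$$
For fixed $B$ one separates the $m_i$ on edges internal to $B$ — they enter only through the conditions $\lcm\{m_i:r\in\epsilon_i\}\le x$ for $r\in B$ — and bounds their number by an lcm‑counting induction on $|B|$: deleting a positive‑degree vertex of the induced subgraph $G[B]$, whose $\le d$ incident values admit $\le\sum_{n\le x}\tau_2(n)^{d}\ll x(\log x)^{2^d-1}$ choices, gives $\ll_G x^{|B|-c(G[B])}(\log x)^{O(1)}$ internal tuples, where $c(G[B])$ is the number of components of $G[B]$. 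The remaining sum over the other $m_i$ is a product of truncated harmonic sums; performing it carefully — crucially keeping the decay carried by each factor $\log(x/M_r)$ — should give $x^{-|B|}T_B(x)\ll_G x^{-1}(\log x)^{v+d-1}$, with equality (up to constants) only for $B=\{r\}$ and $\deg r=d$: there the $d$ edges at $r$ each contribute a factor $\sum_{m\le x}m^{-1}\big(\log(x/m)+\gamma\big)\asymp(\log x)^2$ and the remaining $v-1-d$ vertices a factor $\asymp\log x$. Summing over $B$ then yields the asserted $O_G\!\big(x^{-1}\log^{v+d-1}x\big)$.

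I expect the main obstacle to be precisely this estimate of $T_B(x)$ for general $B$: one must verify that no combination of free vertices, boundary harmonic sums, and the decay of the factors $\log(x/M_r)$ produces more than $v+d-1$ powers of $\log x$ after division by $x^{|B|}$, and likewise that the removed sum stays within this bound. Both reduce to elementary but delicate estimates for sums of the shape $\sum_{(m_i)}(\prod_r M_r)^{-1}\prod_r(\log(x/M_r))^{\pm1}$ under lcm‑type side conditions; the decisive structural input is the implication $M_{r_0}>x\Rightarrow\prod_r M_r>x^2$ above, which pushes the divisor‑function tails into their range of convergence.
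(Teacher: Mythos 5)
Your skeleton is the same as the paper's: start from (9), write the inner sum as $\prod_r M_r^{-1}\bigl(\log(x/M_r)+\gamma+E_r\bigr)$ with $|E_r|\ll M_r/x$, extend the main term to all tuples $(m_i)$, expand in powers of $\log x$, and obtain $\rho(G)>0$ from the Euler-factor/independent-set interpretation; that part of your write-up is fine. The difficulty is that the two estimates which actually produce the exponent $v+d-1$ are exactly the ones you leave open, and the routes you sketch do not reach it. For your removed sum, replacing ``some $M_r>x$'' by ``$\prod_r M_r>x^2$'' and doing partial summation against $f_G^+$ gives $x^{-1}(\log x)^{v+e-1}$, and the surplus $(\log x)^{e-1}$ comes from the multiplicity of representations $n=\prod_r M_r$ (the Dirichlet series of $f_G^+$ is $\zeta(2s)^e$ times a factor regular at $s=1/2$, so its partial sums genuinely carry $(\log t)^{e-1}$), not from overestimating the logarithms; moreover, in the critical range $n\asymp x^2$ the logarithmic product is still of size $(\log x)^{v-O(1)}$ (one edge value near $x$, the rest bounded), so shrinking it cannot bridge the gap from $v+e-1$ to $v+d-1$ once the tuple has been collapsed to the single variable $n$ --- and for the graphs this theorem is applied to one has $e$ much larger than $d$ (for $G_3$, $e=9$, $d=3$). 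Note the paper never needs to bound your removed set at all: the expansion with error $O(M_r/x)$ remains valid when $M_r>x$, and the only discarded tuples are those with some $m_i>x$, which are handled by the multivariable tail estimate (10) quoted from \cite{RH} and sharpened to $O(1/x)$; an input of that kind, retaining the several-variable lcm structure, is needed on your route too.

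The second gap is $T_B(x)$. For $|B|=1$ your heuristic gives the right answer, but it is precisely the nontrivial bound $\sum_{m_1,\ldots,m_e\le x}|\mu(m_1)\cdots\mu(m_e)|/(M_1\cdots M_{r-1}M_{r+1}\cdots M_v)=O(\log^d x)$ that the paper cites from \cite{RH}; the sum does not factor over edges because the $M_{r'}$ are lcm's sharing variables, so ``each edge at $r$ contributes $\asymp\log^2x$'' is not yet a proof. For $|B|\ge2$ the decoupled bound you describe (internal tuples counted on $G[B]$ alone, times a product of independent truncated harmonic sums $\asymp(\log x)^2$ per boundary edge) can overshoot the target: take two adjacent vertices of degree $d$, each otherwise joined to $d-1$ leaves, so $v=2d$ and the target is $x^{-1}(\log x)^{3d-1}$; with $B$ the two centers your bound is $x^{-2}\cdot x\cdot(\log x)^{4(d-1)}=x^{-1}(\log x)^{4d-4}$, too large for every $d\ge4$. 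What rescues such terms is not the decay of the $\log(x/M_r)$ factors (in this example they are already fully exploited) but exactly the coupling you discard: given the boundary values with lcm $L$ at a vertex of $B$, the internal variable admits only $O(\tau(L)x/L)$ values, and the factor $1/L$ is what brings the contribution back under $x^{-1}(\log x)^{v+d-1}$. So both points you flag as ``expected obstacles'' are genuine gaps, and the second needs a different mechanism from the one you propose.
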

    \begin{proof}
    The inner sum of (9) is
    $$
    \frac1{M_1\cdots M_v}\sum_{\forall_r, b_r\leq \frac x{M_r}} \frac1{b_1\cdots b_v}=\frac1{M_1\cdots M_v}\prod_{r=1}^v \left( \log\frac x{M_r}+\gamma+O\left(\frac{M_r}x\right)\right).
    $$
    It was proved in~\cite{RH} that for any $1\leq r\leq v$ with obvious modification for $r=1, v$,
    $$
    \sum_{m_1,\ldots,m_e\leq x} |\mu(m_1)\cdots \mu(m_e)|\frac1{M_1\cdots M_{r-1}M_{r+1} \cdots M_v}=O(\log^d x)
    $$
    Thus, the $O$-terms in the product contribute $O(x^{-1}\log^{v-1+d}x)$.

    The coefficients of $P_v$ are linear combinations of the absolutely convergent sums
    $$
    \sum_{m_1,\ldots, m_e=1}^{\infty} \mu(m_1)\cdots \mu(m_e) \frac{(\gamma-\log M_{r_1})\cdots (\gamma-\log M_{r_s})}{M_1\cdots M_v}
    $$
    and the sums over $m_i>x$ for some $i\leq e$ in the above contribute to the error term. In fact, it was proved in~\cite{RH} that if $m_i$ is the largest among $m_1,\ldots, m_e$, then
    \begin{equation}
    \sum_{m_1,\ldots, m_{i-1}\leq m_i} \sum_{m_i>x} \sum_{m_{i+1},\ldots, m_e\leq m_i} |\mu(m_1)\cdots \mu(m_e)|\frac1{M_1\cdots M_v} = O\left( \frac{(\log\log x)^w}x\right)
    \end{equation}
    for some $w=w(G)>0$. Applying the absolute convergence of the Dirichlet series of $\zeta(s)^{-1}\sum \frac{|\mu(n)|}{n^s}\left(\frac{\sigma(n)}n\right)^w$ over $\Re (s) > 1/2$, we have
    $$
    \sum_{n>x} \frac{|\mu(n)|}{n^2}\left(\frac{\sigma(n)}n\right)^w = O\left(\frac1x\right).
    $$
    Thus, the error term of (10) can be improved to $O\left( \frac1x\right)$. This yields
    \begin{align*}
    \sum_{m_1,\ldots, m_{i-1}\leq m_i} &\sum_{m_i>x} \sum_{m_{i+1},\ldots, m_e\leq m_i} |\mu(m_1)\cdots \mu(m_e)|\frac{(\gamma-\log M_{r_1})\cdots (\gamma-\log M_{r_s})}{M_1\cdots M_v}= O\left( \frac{\log^s x}x\right).
    \end{align*}
    The error term from the coefficients is therefore $O(x^{-1} \log^{s+v-s} x)=O(x^{-1} \log^v x)$.
    \end{proof}
    The leading coefficient of $P_v$ ($s=0$ above) is
    $$
    \sum_{m_1,\ldots, m_e=1}^{\infty} \mu(m_1)\cdots \mu(m_e) \frac1{M_1\cdots M_v} = \rho(G)
    $$
    that will be further discussed in Section 3 and Section 4.

    Now we consider the reciprocal sum over $G$-wise coprime tuples with some {\it hyperbolic constraints} $\prod_{j\in A_i} a_j \leq x$ for some subsets $A_i\subseteq V$, $i=1,\ldots k$ such that $\cup A_i = V$. As noted in~\cite{RH}, the sums over $m_i$ can be extended to all tuples of positive integers. If $m_i>x$ for some $m_i$, then the inner sum over $a_j$ with $j\in\epsilon_i$ vanishes. Thus, we are considering the sum
    \begin{align*}
    \sum_{\substack{{a_1,\ldots, a_v\leq x}\\{G\textrm{-wise coprime}}\\{\forall_{i\leq k}, \ \prod_{j\in A_i}a_j \leq x}}}& \frac1{a_1\cdots a_v}= \sum_{m_1,\ldots, m_e=1}^{\infty} \mu(m_1)\cdots \mu(m_e)\sum_{\substack{{a_1,\ldots, a_v \leq x}\\{\forall_r, M_r | a_r}\\{\forall_{i\leq k}, \ \prod_{j\in A_i}a_j \leq x}}} \frac1{a_1\cdots a_v}\\
    &=\sum_{m_1,\ldots, m_e=1}^{\infty} \mu(m_1)\cdots \mu(m_e)\frac1{M_1\cdots M_v}\sum_{\substack{{\forall_r, b_r\leq \frac x{M_r}}\\{\forall_{i\leq k}, \ \prod_{j\in A_i}b_j \leq x/\prod_{j\in A_i} M_j}}} \frac1{b_1\cdots b_v}\\
    &=\sum_{m_1,\ldots, m_e=1}^{\infty} \mu(m_1)\cdots \mu(m_e)\frac1{M_1\cdots M_v}\sum_{\forall_{i\leq k}, \ \prod_{j\in A_i}b_j \leq x/\prod_{j\in A_i} M_j}\frac1{b_1\cdots b_v}.
    \end{align*}
    In the last sum, the conditions $\forall_r, b_r\leq x/M_r$ are dropped since the hyperbolic constraints imply them. The inner sum is more difficult than (9) due to the hyperbolic constraints. However, if we focus on the main term, then the following result is obtained by an elementary method.
    \begin{theorem}
    We have
    $$
    \sum_{\substack{{a_1,\ldots, a_v\leq x}\\{G\textrm{-wise coprime}}\\{\forall_{i\leq k}, \ \prod_{j\in A_i}a_j \leq x}}} \frac1{a_1\cdots a_v}=\rho(G)\mathrm{vol}(D) \log^v x + O(\log^{v-1}x)
    $$
    where $D$ is a convex polytope defined by the hyperbolic constraints $\{A_i\}_{i\leq k}$,
    $$
    D=\left\{(t_1,\ldots, t_v)\in [0,\infty)^v \ \bigg\vert  \forall_{i\leq k}, \ \sum_{j\in A_i}t_j \leq 1 \right\}.
    $$
    \end{theorem}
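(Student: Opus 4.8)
The plan is to start from the last displayed identity of the section, which expresses the sum in question as
\[
\sum_{m_1,\ldots,m_e=1}^{\infty}\mu(m_1)\cdots\mu(m_e)\,\frac{1}{M_1\cdots M_v}\,T(\vec y(m)),\qquad T(\vec y):=\sum_{\forall i,\ \prod_{j\in A_i}b_j\le y_i}\frac{1}{b_1\cdots b_v},
\]
with $y_i(m)=x/\prod_{j\in A_i}M_j$. The crux is a local estimate for $T$ that is uniform in $\vec y$: for every $\vec y$ with $y_i\le x$ (and trivially when some $y_i<1$, where both sides vanish),
\[
T(\vec y)=\mathrm{vol}\!\left(D_{\vec y}\right)+O\!\left(\log^{v-1}x\right),\qquad D_{\vec y}:=\Bigl\{\vec u\in[0,\infty)^v:\ \textstyle\sum_{j\in A_i}u_j\le\log y_i\ \text{ for all }i\Bigr\},
\]
where $\mathrm{vol}$ is Lebesgue measure on $\R^v$; note that $D_{(x,\dots,x)}=(\log x)D$, so $\mathrm{vol}(D_{(x,\dots,x)})=(\log x)^v\mathrm{vol}(D)$.

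To prove this estimate I would compare $T(\vec y)$ with $\int_{\widetilde R}\frac{dt_1\cdots dt_v}{t_1\cdots t_v}$, where $\widetilde R=\{\vec t\in[1,\infty)^v:\prod_{j\in A_i}t_j\le y_i\ \forall i\}$; the substitution $u_j=\log t_j$ turns this integral into $\mathrm{vol}(D_{\vec y})$. Attaching to each lattice point $\vec b$ the unit cube $Q_{\vec b}=\prod_j[b_j,b_j+1]$ and using $\int_{Q_{\vec b}}\frac{dt}{t_1\cdots t_v}=\prod_j\log(1+b_j^{-1})=\frac{1}{b_1\cdots b_v}\bigl(1+O(\sum_j b_j^{-1})\bigr)$ gives $T(\vec y)=\int_{U}\frac{dt}{t_1\cdots t_v}+O(\log^{v-1}x)$ with $U=\bigcup_{\vec b}Q_{\vec b}$, the error coming from $\sum_j\sum_{\vec b\in\widetilde R\cap\N^v}b_j^{-2}\prod_{l\ne j}b_l^{-1}\ll\log^{v-1}x$. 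Finally $\widetilde R\subseteq U$, and $U\setminus\widetilde R$ sits in an $O(1)$-thick layer near $\partial\widetilde R$: on the part where all $t_j\ge 2$ one has $\prod_{j\in A_i}t_j\le 2^v y_i$, so this part has $\frac{dt}{t_1\cdots t_v}$-measure at most $\mathrm{vol}(D_{2^v\vec y})-\mathrm{vol}(D_{\vec y})\ll\log^{v-1}x$, while on the part where some $t_j\in[1,2)$ one carries out that variable's integral $\int_1^2 dt_j/t_j$ to get a bound $\ll\log^{v-1}x$; together with $\int_U\frac{dt}{t_1\cdots t_v}\ge\int_{\widetilde R}\frac{dt}{t_1\cdots t_v}=\mathrm{vol}(D_{\vec y})$ this yields the estimate.

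Next I would extract the leading term of $\mathrm{vol}(D_{\vec y(m)})$. Writing $\log y_i(m)=\log x-\sigma_i(m)$ with $\sigma_i(m):=\sum_{j\in A_i}\log M_j\ge 0$, one has $D_{\vec y(m)}\subseteq(\log x)D$, so $0\le(\log x)^v\mathrm{vol}(D)-\mathrm{vol}(D_{\vec y(m)})$; moreover $(\log x)D\setminus D_{\vec y(m)}\subseteq\bigcup_i\{\vec u\in(\log x)D:\sum_{j\in A_i}u_j>\log x-\sigma_i(m)\}$, and integrating out one coordinate of $A_i$ shows the $i$-th set has volume $\le\sigma_i(m)\,(\log x)^{v-1}$. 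Since $\sum_i\sigma_i(m)\ll\sum_j\log M_j\ll\log(M_1\cdots M_v)$, this gives $\mathrm{vol}(D_{\vec y(m)})=(\log x)^v\mathrm{vol}(D)+O((\log x)^{v-1}(M_1\cdots M_v)^{\epsilon})$, uniformly in $m$ (when $M_1\cdots M_v>x$ both sides are $\ll(\log x)^v$ and the bound is immediate). It then remains to sum against $\mu(m_1)\cdots\mu(m_e)/(M_1\cdots M_v)$. As in the proof of Theorem 2.2, the series $\sum_{m_1,\dots,m_e\ge1}|\mu(m_1)\cdots\mu(m_e)|(M_1\cdots M_v)^{-(1-\epsilon)}$ converges for small $\epsilon>0$: its $p$-th Euler factor is $1+\sum_{\emptyset\ne S\subseteq E}p^{-(1-\epsilon)|V(S)|}=1+O(p^{-2+2\epsilon})$, where $V(S)=\bigcup_{(i,j)\in S}\{i,j\}$ and every nonempty $S$ has $|V(S)|\ge 2$, so the product over $p$ converges; the case $\epsilon=0$ recovers the absolutely convergent $\rho(G)=\sum_m\mu(m_1)\cdots\mu(m_e)(M_1\cdots M_v)^{-1}$ of Theorem 2.2. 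Combining the three displays, the sum in question equals
\[
(\log x)^v\mathrm{vol}(D)\sum_m\frac{\mu(m_1)\cdots\mu(m_e)}{M_1\cdots M_v}+O\!\left((\log x)^{v-1}\sum_m\frac{|\mu(m_1)\cdots\mu(m_e)|}{(M_1\cdots M_v)^{1-\epsilon}}\right)=\rho(G)\,\mathrm{vol}(D)\log^v x+O(\log^{v-1}x),
\]
where $\rho(G)>0$ by Theorem 2.2 and $\mathrm{vol}(D)>0$ because $(\tfrac1{2v},\dots,\tfrac1{2v})$ lies in the interior of $D$ (each defining inequality reads $|A_i|/(2v)\le\tfrac12<1$).

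The main obstacle is the uniform comparison $T(\vec y)=\mathrm{vol}(D_{\vec y})+O(\log^{v-1}x)$: one must control the discrepancy between the lattice-point sum and the corresponding volume uniformly in $\vec y$, i.e.\ uniformly in the side-lengths $\log y_i$, some of which may be small, which is precisely what forces the boundary-layer analysis split at $t_j=2$ above. By comparison, the volume-deficit estimate for $\mathrm{vol}(D_{\vec y(m)})$ and the Euler-product convergence are routine.
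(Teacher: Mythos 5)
Your argument is correct, and its skeleton coincides with the paper's: both start from the M\"obius-expanded identity (11) with the $m_i$-sums extended to infinity, both convert the inner $b$-sum into the volume of the log-polytope, and both then sum the main and error terms over the $m_i$ using the absolute convergence coming from $f_G^+$. Where you differ is in the key technical step. The paper replaces the inner sum by an integral one variable at a time (rightmost variable first, each replacement costing an $O(1)$ factor that propagates to $O(\log^{v-1}x)$), exactly as in the detailed $k=2$ computation of Section 3.1, and only afterwards passes to log-coordinates; you instead prove a single uniform lemma $T(\vec y)=\mathrm{vol}(D_{\vec y})+O(\log^{v-1}x)$ by attaching unit cubes to lattice points and estimating the boundary layer via the dilation $\vec y\mapsto 2^v\vec y$ (your split at $t_j=2$ is actually unnecessary, since $t_j\le b_j+1\le 2b_j$ already holds for all $b_j\ge 1$, but it does no harm). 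Your route buys explicit uniformity in $\vec y$, i.e.\ in the $M_j$, and you spell out the summability of the error terms over the $m_i$ --- the slab bound $\sigma_i(m)(\log x)^{v-1}$ for the volume deficit and the convergence of $\sum|\mu(m_1)\cdots\mu(m_e)|(M_1\cdots M_v)^{-(1-\epsilon)}$ via the Euler factor $1+\sum_{\emptyset\ne S\subseteq E}p^{-(1-\epsilon)|V(S)|}=1+O(p^{-2+2\epsilon})$ --- points the paper leaves implicit (it keeps the error as $O(\log^{v-1}x\cdot\sum_j\log M_j)$ and sums against $|\mu|/(M_1\cdots M_v)$, which converges since $\log M_j\ll M_j^{\epsilon}$). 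The paper's iterated replacement is slightly shorter and mirrors its worked $k=2$ example; yours is more self-contained on the uniformity issue, which is the genuine crux, and also records the positivity of $\mathrm{vol}(D)$, which the paper does not need to state at this point.
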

    \begin{proof}
    We begin with
    \begin{equation}
    \sum_{\substack{{a_1,\ldots, a_v\leq x}\\{G\textrm{-wise coprime}}\\{\forall_{i\leq k}, \ \prod_{j\in A_i}a_j \leq x}}} \frac1{a_1\cdots a_v}= \sum_{m_1,\ldots, m_e=1}^{\infty} \frac{\mu(m_1)\cdots \mu(m_e)}{M_1\cdots M_v}\sum_{\forall_{i\leq k}, \ \prod_{j\in A_i}b_j \leq x/\prod_{j\in A_i} M_j}\frac1{b_1\cdots b_v}.
    \end{equation}
    Recall that for any $1\leq t$, we have
    $$
    \sum_{b\leq t} \frac{1}b = \log t+ O\left(1\right)=\int_1^t \frac{1}b db+O\left(1\right).
    $$
    Using this for the rightmost variable $b_v$, we replace the summation by the integral over corresponding restrictions, then interchange the integral to the leftmost and repeat until we replace all summations by the integrals. At each step of this repetition, the error term $O(1)$ contributes to $O(\log^{v-1}x)$. The inner sum becomes
    $$
    \int_{\substack{{\forall_r, 1\leq b_r}\\{\forall_{i\leq k}, \ \prod_{j\in A_i}b_j \leq x/\prod_{j\in A_i} M_j}}} \frac1{b_1\cdots b_v} db_1\cdots db_v+O(\log^{v-1}x).
    $$
    Applying the change of variables $\log b_r=t_r$ and $t_r/\log x=u_r$, the integral becomes
    \begin{align*}
    \int_{\substack{{\forall_r, 0\leq t_r}\\{\forall_{i\leq k}, \sum_{j\in A_i} t_j \leq \ \log x - \sum_{j\in A_i} \log M_j} }} dt_1\cdots dt_v&=(\log^v x)\int_{\substack{{\forall_r, 0\leq u_r}\\{\forall_{i\leq k}, \sum_{j\in A_i} u_j \leq \ 1 - \frac{\sum_{j\in A_i} \log M_j}{\log x}} }}du_1\cdots du_v\\
    &=(\log^v x)\left( \mathrm{vol} (D)+ O\left( \frac{\sum_{j\leq v} \log M_j}{\log x}\right)\right) \\
    \end{align*}
    Inserting this into the sums over $m_i$, we obtain the result.
    \end{proof}
    \section{Coprimality Graph and Proof of Theorem 1.1}
    For any graph $G=(V,E)$ with $|V|=v$, $|E|=e$, it is proved in~\cite[Proposition 2]{RH} that the following functions $f_G:\N\rightarrow\Z$ and $f_G^+:\N\rightarrow\Z$ are multiplicative where
    $$
    f_G(m)=\sum_{M_1\cdots M_v=m} \mu(m_1)\cdots \mu(m_e), \ \ f_G^+(m)=\sum_{M_1\cdots M_v=m} |\mu(m_1)\cdots \mu(m_e)|.
    $$
    Also,~\cite{RH} shows that $f_G(1)=f_G^+(1)=1$, $f_G(p)=f_G^+(p)=0$, $f_G(p^2)=-e$, $f_G^+(p^2)=e$, and $f_G(p^{\alpha})=f_G^+(p^{\alpha})=0$, for all $\alpha>v$. Thus, we obtain the absolutely convergent Dirichlet series
    $$
    \sum_{m=1}^{\infty} \frac{f_G(m)}{m^s} = \prod_{p\in\mathcal{P}} \left( 1+ \frac{f_G(p^2)}{p^{2s}} + \cdots + \frac{f_G(p^v)}{p^{vs}}\right),  \ \ \Re(s)>\frac12
    $$
    and
    $$
    \sum_{m=1}^{\infty} \frac{f_G^+(m)}{m^s} = \prod_{p\in\mathcal{P}} \left( 1+ \frac{f_G^+(p^2)}{p^{2s}} + \cdots + \frac{f_G^+(p^v)}{p^{vs}}\right),  \ \ \Re(s)>\frac12
    $$
    By~\cite[Lemma 3]{RH}, $f_G(p^a)$ depends only on $G$ and $a$, not on $p$.  Then we have $w=w(G)>0$ such that
    $$
    \sum_{m\leq x} \frac{|f_G(m)|}{\sqrt m}\leq \sum_{m\leq x} \frac{f_G^+(m)}{\sqrt m}\leq \prod_{p\leq x} \left(1+\frac wp\right)\leq C \log^w x.
    $$
    Thus, the series $\sum_{m=1}^{\infty} \frac{f_G(m)}m$ is absolutely convergent. Then the number $\rho(G)$ defined in~\cite{RH} given by the series
    $$
    \rho(G)=\sum_{m_1,\ldots, m_e=1}^{\infty} \frac{\mu(m_1)\cdots \mu(m_e)}{M_1\cdots M_v}=\sum_{m=1}^{\infty} \frac{f_G(m)}m
    $$
    is absolutely convergent.

    For each $F\subseteq E$, let $v(F)$ be the number of non-isolated vertices of $F$. By~\cite[Lemma 3]{RH}, we have
    $$
    \rho(G)=\prod_{p\in\mathcal{P}} Q_G\left(\frac1p\right)
    $$
    where $Q_G(x)$ is a polynomial defined by
    $$
    Q_G(x)=\sum_{F\subseteq E} (-1)^{|F|}x^{v(F)}=1+c_2 x^2 + \cdots + c_v x^v.
    $$
    For each prime $p$ and $a\geq 2$, we have $c_a=f_G(p^a)$. Moreover, $c_2=-e$ by~\cite[Section 2]{RH}.

    By~\cite[Theorem 1]{H2}, we have another expression for $\rho(G)$.
    $$
    \rho(G)=\prod_{p\in\mathcal{P}} \left(\sum_{m=0}^v i_m(G)\left(1-\frac1p\right)^{v-m}\frac1{p^m}\right)
    $$
    where $i_m(G)$ is the number of $S\subseteq V$ with $|S|=m$ such that no two vertices are connected by an edge in $G$ (such a set is called {\it independent set}). In view of this expression, $\rho(G)>0$ for any graph with $v\geq 1$. Analyzing the proofs of these papers, we have the following identity
    $$Q_G(x)=\sum_{F\subseteq E} (-1)^{|F|}x^{v(F)}=\sum_{m=0}^v i_m(G)\left(1-x\right)^{v-m}x^m$$
    for $x=1/p$ for each prime $p$. This gives a proof that the above holds for any $x$. We may have an interesting combinatorial proof to the above identity. However, this is not the main topic of our result. We do not give the combinatorial proof here.
    \subsection{The leading terms of $S_k$ for $k=2,3$}
    For $k=2$, recall by putting $a_3=(n_1,n_2)$, $n_1=a_1a_3$, and $n_2=a_2a_3$ that
    $$
    \sum_{n_1, n_2\leq x} \frac1{[n_1,n_2]}=\sum_{\substack{{a_1,a_2,a_3\leq x}\\{G\textrm{-wise coprime}}\\{a_1a_3, a_2a_3\leq x}}}\frac1{a_1a_2a_3}=\sum_{e\leq x}\mu(e)\sum_{\substack{{a_1,a_2,a_3\leq x}\\{e|a_1, e|a_2}\\{a_1a_3, a_2a_3\leq x}}}\frac1{a_1a_2a_3}
    $$
    where $G$ is the graph $G=(V,E)$ with $V=\{1,2,3\}$ and $E=\{(1,2)\}$. It is easy to see that $\rho(G)=\frac6{\pi^2}$ and $\textrm{vol}(D)=\frac13$ where $D$ is $\{(x,y,z)\in [0,1]^3 | x+y\leq 1, \ x+z\leq 1\}$.

    We give a detailed explanation of how the inner sum (which is a triple sum in case $k=2$) in (11) is replaced by the triple integral at a cost of an error term. Let $A, B\geq 1$. In the following, $u\wedge v = \min(u,v)$ and we use indices $a$, $b$, and $c$ for simplicity of notations. We have
    \begin{align*}
    \sum_{\substack{{ac\leq x/A}\\{bc\leq x/B}}} \frac1{abc}&=\sum_{a\leq \frac xA}\sum_{b\leq \frac xB}\sum_{c\leq \frac x{Aa} \wedge \frac x{Bb}} \frac1{abc}\\
    &=\sum_{a\leq \frac xA} \sum_{b\leq \frac xB} \frac1{ab} \left(\int_1^{\frac x{Aa} \wedge \frac x{Bb}} \frac1c \ dc + O(1)\right)\\
    &=\int_1^{\frac x{A} \wedge \frac x{B}} \frac 1c \sum_{a\leq \frac x{Ac}} \frac 1a \sum_{b\leq \frac x{Bc}} \frac1b  \ dc + O(\log^2 x)\\
    &=\int_1^{\frac x{A} \wedge \frac x{B}} \frac 1c \sum_{a\leq \frac x{Ac}} \frac 1a \left( \int_1^{\frac x{Bc}} \frac1b \ db +O(1) \right) dc + O(\log^2 x)\\
    &=\int_1^{\frac xB} \frac1b \int_1^{\frac xA \wedge \frac x{Bb}} \frac1c \sum_{a\leq \frac x{Ac}} \frac 1a  \ dc db+ O(\log^2 x)\\
    &=\int_1^{\frac xB} \frac1b \int_1^{\frac xA \wedge \frac x{Bb}} \frac1{c}  \left( \int_1^{\frac x{Ac}} \frac 1a \ da + O(1) \right) dcdb + O(\log^2 x)\\
    &=\int_1^{\frac xB}   \int_1^{\frac xA \wedge \frac x{Bb}} \int_1^{\frac x{Ac}} \frac1{bca} \ dadcdb + O(\log^2 x)\\
    &=\iiint\limits_{\substack{{ac\leq x/A}\\{bc\leq x/B}}} \frac1{abc} dadbdc + O(\log^2 x).
    \end{align*}

    Thus, by Theorem 2.2, we have
    $$
    S_2(x)=\sum_{n_1, n_2\leq x} \frac1{[n_1,n_2]}=\frac2{\pi^2}\log^3 x + O(\log^2 x).
    $$
    Applying Dirichlet hyperbola method~\cite[(12.1.4)]{Ti} to the inner sum over $a_r$, $r\leq 3$, it is also possible to obtain a polynomial $P_3(x)$ of degree $3$ such that
    \begin{equation}S_2(x)=P_3(\log x)+O\left(\frac{\log^2 x}{\sqrt x}\right).\end{equation}
    A proof of (12) was given in~\cite[Theorem 2.1]{HLT}. The main method is Dirichlet hyperbola method, but the approach is different from ours.

    For $k=3$, we write $d=(n_1,n_2,n_3)$, $n_1=afgd$, $n_2=befd$, $n_3=cegd$, and $[n_1,n_2,n_3]=abcdefg$. This decomposition was used in~\cite{FF1}. Thus, $S_3(x)$ becomes
    $$
    \sum_{\substack{{a,b,c,d,e,f,g\leq x}\\{G\textrm{-wise coprime}}\\{afgd, befd, cegd\leq x}}} \frac1{abcdefg}
    $$
    where $G=(V,E)$ with $V=\{a,b,c,d,e,f,g\}$ and
    $$E=\{(a,b),(a,c),(a,e),(b,c),(b,g),(c,f),(e,g),(e,f),(f,g)\}.$$
    Therefore, by Theorem 2.2, we have
    $$
    S_3(x)=\rho(G)\textrm{vol}(D)\log^7 x+ O(\log^6 x).
    $$
    Here, $D$ is a $7$-dimensional polytope
    $$\{(a,b,c,d,e,f,g)\in [0,1]^7 | a+f+g+d\leq 1, b+e+f+d\leq 1, c+e+g+d\leq 1\}.$$
    The volume of this polytope is $11/3360$ by SageMath computation.
 \begin{figure}[!h]\includegraphics[width=6in]{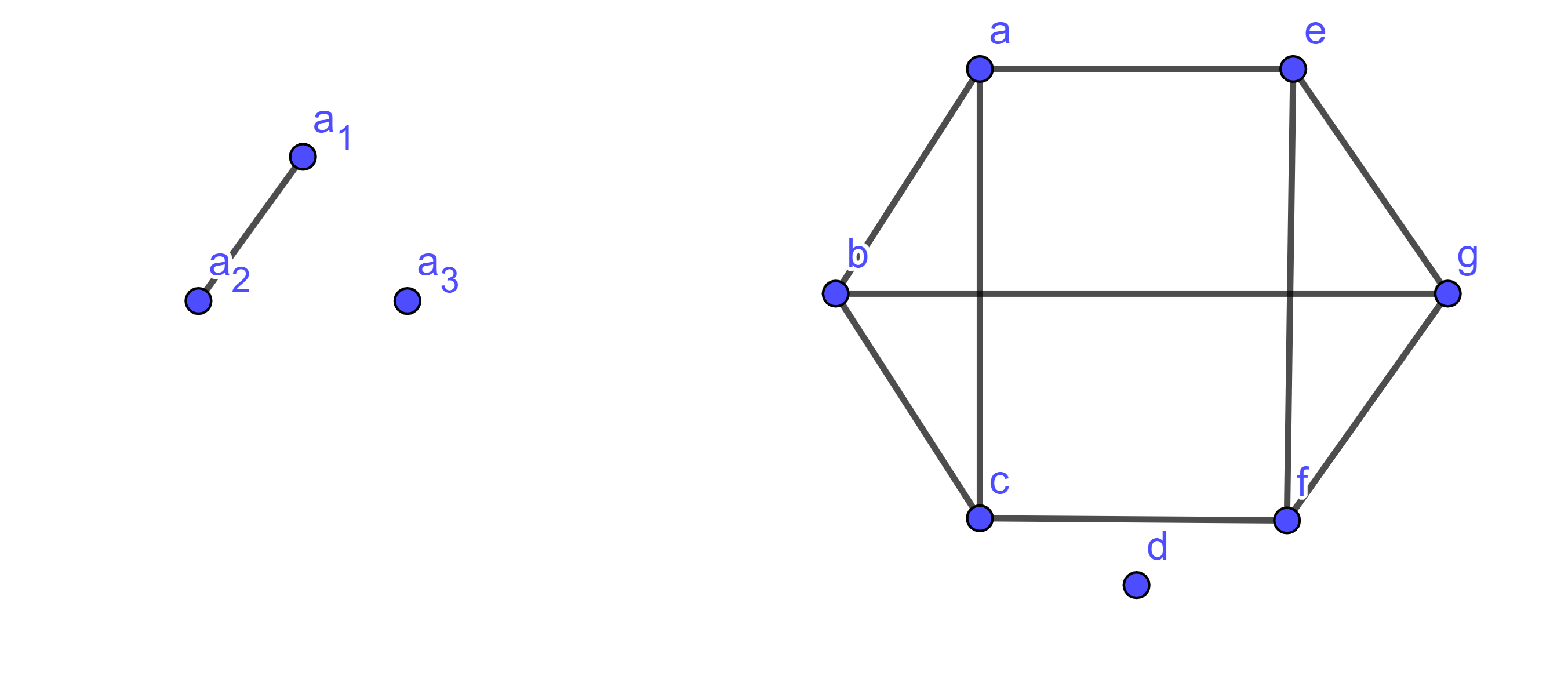}\caption{Coprimality graphs for $k=2,3$ (GeoGebra)}\end{figure}

    We compute $\rho(G)$ by Hu's expression. We have $i_0(G)=1$ due to the empty set, $i_1(G)=7$ due to the vertices, $i_2(G)=12$ due to the $6$ pairs of vertices without $d$ which do not form edges in $G$ and $6$ pairs of vertices with $d$, $i_3(G)=6$ due to adjoining $d$ to the $6$ pairs of vertices without $d$, and $i_m(G)=0$ if $m\geq 4$. We obtain an Euler product and an approximation by Python.
    $$
    \rho(G)=\prod_{p\in\mathcal{P}} \left(1-\frac9{p^2}+\frac{16}{p^3}-\frac9{p^4}+\frac1{p^6}\right)\approx 0.04932167.
    $$
    Therefore,
    $$
    \lim_{x\rightarrow\infty}\frac{S_3(x)}{\log^7 x}=\rho(G)\textrm{vol}(D)=\frac{11}{3360} \prod_{p\in\mathcal{P}} \left(1-\frac9{p^2}+\frac{16}{p^3}-\frac9{p^4}+\frac1{p^6}\right)\approx 0.00016147.
    $$
    \subsection{Coprimality graphs and the leading terms of $S_k(x)$, $U_k(x)$ and $V_k(x)$ for all $k\geq 2$}
    The coprimality graph for $k=2$ is $G_2=(\{1,2,3\}, \{(1,2)\})$ as in Figure 1. We construct $G_k$ inductively. Assume that $G_k=(V_k,E_k)=(\{1,\ldots, 2^k-1\}, \{\varepsilon_1,\ldots, \varepsilon_{e_k}\})$ is the coprimality graph for some $k\geq 2$. Regard the labels of vertices with $k$-bit binary strings that contains at least one $1$,
    $$V_k=\{1=0\cdots 01, 2=0\cdots 10, 3=0\cdots 11, \ldots, v_k=2^k-1=1\cdots 11\}.$$
 \begin{figure}[!h]\includegraphics[width=6in]{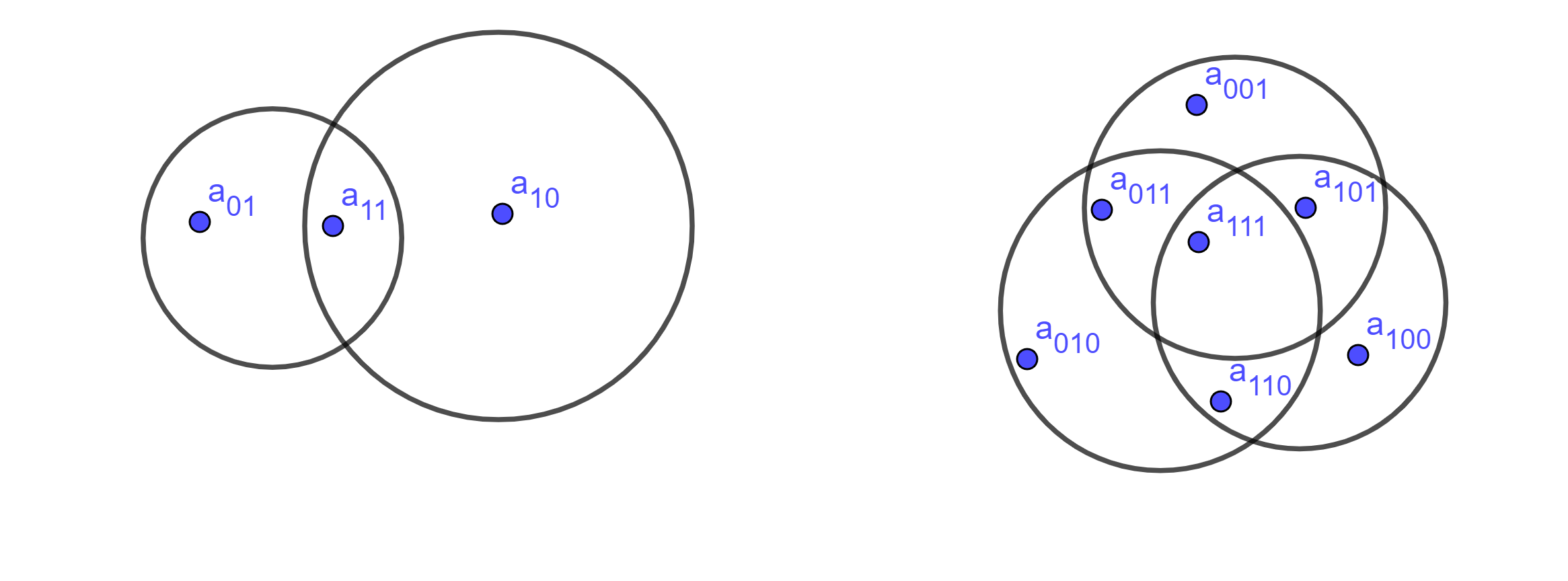}\caption{Vertex labeling for $k=2,3$ (GeoGebra)}\end{figure}
    The hyperbolic constraints $\{A_{k,i}\}_{i\leq k}$ are given by the rule that $A_{k,i}$ is the set of elements in $V_k$ such that $i$-th binary digit (from the right) is $1$. Then for each $i\leq k$, let
    $$
    n_i=\prod_{j\in A_{k,i}} a_j \leq x.
    $$
    Our inductive hypothesis is that under the coprimality conditions $E_k=\{\varepsilon_1,\ldots, \varepsilon_{e_k}\}$ imposed on $(a_1,\ldots, a_{v_k})\in (\N\cap[1,x])^{v_k}$, we have
    $$
    [n_1,\ldots, n_k]=\prod_{j\in V_k} a_j, \ \ (n_1,\ldots, n_k)=a_{v_k},
    $$
    hence
    $$
    S_k(x)=\sum_{n_1,\ldots, n_k\leq x} \frac1{[n_1,\ldots,n_k]}=\sum_{\substack{{a_1,\ldots, a_{v_k}\leq x}\\{G_k\textrm{-wise coprime}}\\{\forall_{i\leq k}, \prod_{j\in A_{k,i}}a_j\leq x}}}\frac1{a_1\cdots a_{v_k}},
    $$
    and
    $$
    U_k(x)=\sum_{\substack{{n_1,\ldots, n_k\leq x}\\{(n_1,\ldots, n_k)=1}}} \frac1{[n_1,\ldots, n_k]}=\sum_{\substack{{a_1,\ldots, a_{v_k}\leq x, \ a_{v_k}=1}\\{G_k\textrm{-wise coprime}}\\{\forall_{i\leq k}, \prod_{j\in A_{k,i}}a_j\leq x}}}\frac1{a_1\cdots a_{v_k}}.
    $$
    Let $V_{k+1}=\{1,\ldots, v_{k+1}=2^{k+1}-1\}$ be the set of ($k+1$)-bit binary strings that contains at least one $1$. Then we may also obtain $V_{k+1}$ by concatenating $1$ on the left for each element in $V_k$ and including $2^k=10\cdots 0$. Let $A_{k+1,i}=A_{k,i}\cup \{2^k+j \ | j\in A_{k,i}\}$ for each $i\leq k$ and let $A_{k+1,k+1}$ be the set of elements in $V_{k+1}$ starting with $1$. This process of concatenating and including one element can be viewed as introducing
    $$
    n_{k+1}=\prod_{j\in A_{k+1,k+1}} a_j \leq x.
    $$
    For each $i\leq k$, we redefine $n_i$'s as
    $$
    n_i=\prod_{j\in A_{k+1,i}} a_j \leq x.
    $$
    For each edge $(j,\ell)\in E_k$, let $e_{j,\ell}=\{2^k+j, j\}\times \{2^k+\ell, \ell\}$. We write
    $$
    E'_k=\bigcup_{(j,\ell)\in E_k} e_{j,\ell}.
    $$
    The edges in $E'_k$ extend the existing edges. By the inductive hypothesis,
    $$
    [n_1,\ldots, n_k]=\prod_{j\leq 2^{k+1}-1, j\neq 2^k} a_j.
    $$

    For each $i\leq k$, let $P(n_i,n_{k+1})$ be a product of $a_j$ where $j$'s binary digits start with $1$ and $i$-th digit (from the right) is $1$. That is,
    $$
    P(n_i,n_{k+1})=\prod_{j\in A_{k+1,k+1}\cap A_{k+1,i}}a_j.
    $$
    We want to have $P(n_i,n_{k+1})=(n_i,n_{k+1})$. To this end, we need to impose new coprimality conditions from
    $$
    \left(\frac{n_i}{P(n_i,n_{k+1})},\frac{n_{k+1}}{P(n_i,n_{k+1})}\right)=1, \ \forall i\leq k.
    $$
    Thus, the newly imposed coprimality conditions for each $i\leq k$ are between the following.\\

    $\mathbf{1.}$ $a_j$'s with $j$'s binary digits starting with $0$ and the $i$-th digit is $1$.

    $\mathbf{2.}$ $a_j$'s with $j$'s binary digits starting with $1$ and the $i$-th digit is $0$. \\

    Let $E'_{k+1}$ be the set of edges from these new conditions $(j_1,j_2)\in V_{k+1}^2$ where $j_1$ from {\bf 1} and $j_2$ from {\bf 2} so that
    $$
    E'_{k+1}=\bigcup_{i\leq k} \left(A_{k+1,k+1}^c \cap A_{k+1,i}\right) \times \left(A_{k+1,k+1} \cap A_{k+1,i}^c \right).
    $$
    Then we include these into the set of edges,
    $$
    E_{k+1}=E'_k\cup E'_{k+1}.
    $$
    By De Morgan's law,
    $$
    ([n_1,\ldots, n_k],n_{k+1})=[(n_1,n_{k+1}),\ldots, (n_k,n_{k+1})].
    $$
    By the edges in $E'_{k+1}$, we have for each $i\leq k$,
    $$
    (n_i,n_{k+1})=P(n_i,n_{k+1}).
    $$
    By the edges in $E'_k$ and the inductive hypothesis, we have
    $$
    ([n_1,\ldots, n_k],n_{k+1})=[P(n_1,n_{k+1}),\ldots, P(n_k,n_{k+1})]=\prod_{2^k+1\leq j\leq 2^{k+1}-1} a_j.
    $$
    Hence, we obtain
    $$
    [n_1,\ldots, n_{k+1}]=\left[[n_1,\ldots, n_k],n_{k+1}\right]=\frac{[n_1,\ldots,n_k] n_{k+1}}{\left([n_1,\ldots,n_k], n_{k+1}\right)}=\prod_{j\in V_{k+1}} a_j.
    $$
    Moreover, we obtain by the edges in $E'_{k+1}$,
    $$
    (n_1,\ldots, n_{k+1})=\left((n_1,\ldots, n_k), n_{k+1}\right)=a_{2^{k+1}-1}.
    $$
    We have constructed the coprimality graph $G_{k+1}$ so that
    $$
    S_{k+1}(x)=\sum_{n_1,\ldots, n_{k+1}\leq x} \frac1{[n_1,\ldots,n_{k+1}]}=\sum_{\substack{{a_1,\ldots, a_{v_{k+1}}\leq x}\\{G_{k+1}\textrm{-wise coprime}}\\{\forall_{i\leq k+1}, \prod_{j\in A_{k+1,i}}a_j\leq x}}}\frac1{a_1\cdots a_{v_{k+1}}}
    $$
    and
    $$
    U_{k+1}(x)=\sum_{\substack{{n_1,\ldots, n_{k+1}\leq x}\\{(n_1,\ldots, n_{k+1})=1}}} \frac1{[n_1,\ldots, n_{k+1}]}=\sum_{\substack{{a_1,\ldots, a_{v_{k+1}}\leq x, \ a_{v_{k+1}}=1}\\{G_{k+1}\textrm{-wise coprime}}\\{\forall_{i\leq k+1}, \prod_{j\in A_{k+1,i}}a_j\leq x}}}\frac1{a_1\cdots a_{v_{k+1}}}.
    $$
    This completes the construction of $n_i$, $i\leq k$ through $a_j$'s so that the coprimality conditions are satisfied. Let $j_i$ be the $i$-th binary digit from the right. We may also obtain $a_j$'s directly in terms of $n_i$, $i\leq k$ by setting
    $$
    a_j=\prod_{p\in\mathcal{P}} p^{\nu_p(a_j)}
    $$
    where $\nu_p(a_j)$ is the Euclidean length of the interval
    $$
    \bigcap_{\substack{{i\leq k}\\{j_i=1}}}[0,\nu_p(n_i)]\bigcap_{\substack{{i\leq k}\\{j_i=0}}} [0, \nu_p(n_i)]^c.$$
    Thus,
    $$
    \nu_p(a_j)= \min_{\substack{{i\leq k}\\{j_i=1}}} \nu_p(n_i)-\max_{\substack{{i\leq k}\\{j_i=0}}} \nu_p(n_i).
    $$
    Then the coprimality conditions listed in $E_k$ are satisfied.

    Hence, the leading term of (6) follows from Theorem 2.2 with $c_k=\rho(G_k)\mathrm{vol}(D_k)$ where
    $$
    D_k=\left\{(u_1,\ldots u_{v_k})\in [0,\infty)^{v_k} \bigg\vert \forall_{i\leq k}, \sum_{j\in A_{k,i}} u_j \leq 1\right\}.
    $$
    That is,
    $$
    S_k(x)=\rho(G_k)\mathrm{vol}(D_k) \log^{2^k-1}x + O(\log^{2^k-2}x).
    $$
    To find the leading term of (7), let $V_k-\{v_k\}=V_k^*$ and $G_k^*=(V_k^*,E_k)$. Our construction guarantees that $E_k$ does not contain any edges connecting to the vertex $v_k$. The set of edges of $G_k^*$ and that of $G_k$ are identical. Then we have by~\cite{RH},
    $$
    \rho(G_k)=\rho(G_k^*).
    $$
    For $U_k(x)$, we use
    $$
    U_k(x)=\sum_{\substack{{n_1,\ldots, n_k\leq x}\\{(n_1,\ldots, n_k)=1}}} \frac1{[n_1,\ldots, n_k]}=\sum_{\substack{{a_1,\ldots, a_{v_k}\leq x, \ a_{v_k}=1}\\{G_k\textrm{-wise coprime}}\\{\forall_{i\leq k}, \prod_{j\in A_{k,i}}a_j\leq x}}}\frac1{a_1\cdots a_{v_k}}.
    $$
    The sum on the right side equals
    $$
    \sum_{\substack{{a_1,\ldots, a_{v_k-1}\leq x}\\{G_k^*\textrm{-wise coprime}}\\{\forall_{i\leq k}, \prod_{j\in A_{k,i}^*}a_j\leq x}}}\frac1{a_1\cdots a_{v_k-1}}
    $$
    where $A_{k,i}^*=A_{k,i}-\{v_k\}$ for each $i\leq k$. By Theorem 2.2, the sum equals
    $$
    U_k(x)=\rho(G_k^*)\textrm{vol}(D_k^*) \log^{2^k-2}x+O(\log^{2^k-3}x)
    $$
    where
    $$
    D_k^*=\left\{(u_1,\ldots u_{v_k-1})\in [0,\infty)^{v_k-1} \bigg\vert \forall_{i\leq k}, \sum_{j\in A_{k,i}^*} u_j \leq 1\right\}.
    $$
    Interchanging the volume integral for $D_k$ and put the integral over $u_{v_k}$ the leftmost, we have
    $$
    D_k=\int_0^1 (1-u_{v_k})^{v_k-1}\textrm{vol}(D_k^*)\ du_{v_k} = \frac1{v_k} \textrm{vol}(D_k^*).
    $$
    Hence, the leading term of (7) follows.

    Clearly for any $k\geq 2$, we have $\rho(G_k)>\rho(G_{k+1})$ by observing that
    $$\rho(G_k)=\rho( (V_{k+1}, E_k)) > \rho( (V_{k+1}, E_{k+1}))=\rho(G_{k+1}).$$
    Consider a standard embedding of $D_k$ into $[0,\infty)^{v_{k+1}}$. We see that
    $$\left(\forall_{i\leq k+1}, \sum_{j\in A_{k+1,i}}u_j\leq 1 \right)\textrm{ implies } \left(\forall_{i\leq k}, \sum_{j\in A_{k,i}} u_j\leq 1 \textrm{ and }\sum_{j\in A_{k+1,k+1}}u_j\leq 1\right).$$
    Thus, for any $k\geq 2$, we have $\textrm{vol}(D_{k+1})\leq \frac1{(2^k)!}\textrm{vol}(D_k)$, yielding $c_k>c_{k+1}$. This implies
    $$
    \mathrm{vol}(D_k)\leq \frac1{(2^{k-1})! (2^{k-2})! \cdots (2^2)!\cdot 3}.
    $$
    Therefore, $c_k\rightarrow 0$ quite rapidly as $k\rightarrow\infty$.

    To obtain the leading term of $V_k(x)$, we begin with isolating vertices labeled as powers of $2$. We write $j\neq 2^t$ to indicate that $j$ is not a power of $2$. Then we have

    \begin{align*}
    V_k(x)&=\sum_{n_1,\ldots, n_k\leq x} \frac{n_1\cdots n_k}{[n_1,\ldots, n_k]}\\
    &=\sum_{\substack{{a_1,\ldots, a_{v_k}\leq x} \\{G_k\textrm{-wise coprime}}\\{\forall_{i\leq k},\prod_{j\in A_{k,i}}a_j\leq x}}}\frac{\prod\limits_{i\leq k} \prod\limits_{j\in A_{k,i}}a_j}{a_1\ldots a_{v_k}}=\sum_{\substack{{a_1,\ldots, a_{v_k}\leq x} \\{G_k\textrm{-wise coprime}}\\{\forall_{i\leq k},\prod\limits_{j\in A_{k,i}}a_j\leq x}}}\frac{\prod\limits_{i\leq k} \prod\limits_{j\in A_{k,i}-\{2^{i-1}\}}a_j}{\prod\limits_{\substack{{j\in V_k}\\{j\neq 2^t}}} a_j}\\
    &=\sum_{\substack{{\forall_{j\leq v_k, j\neq 2^t}, a_j\leq x} \\{G_k\textrm{-wise coprime}}\\{\forall_{i\leq k},\prod\limits_{j\in A_{k,i}-\{2^{i-1}\}}a_j\leq x}}}\frac{\prod\limits_{i\leq k} \prod\limits_{j\in A_{k,i}-\{2^{i-1}\}}a_j}{\prod\limits_{\substack{{j\in V_k}\\{j\neq 2^t}}} a_j}\prod_{i\leq k}\sum_{a_{2^{i-1}}\leq x/\prod\limits_{j\in A_{k,i}-\{2^{i-1}\}} a_j} 1\\
    &=\sum_{m_1,\ldots, m_{e_k}=1}^{\infty}\prod\limits_{j\leq e_k}\mu(m_j)\sum_{\substack{{\forall_{j\leq v_k, j\neq 2^t}, M_jb_j\leq x} \\{\forall_{i\leq k},\prod\limits_{j\in A_{k,i}-\{2^{i-1}\}}(M_jb_j)\leq x}}}\frac{\prod\limits_{i\leq k} \prod\limits_{j\in A_{k,i}-\{2^{i-1}\}}(M_jb_j)}{\prod\limits_{\substack{{j\in V_k}\\{j\neq 2^t}}} (M_jb_j)}\prod_{i\leq k}\sum_{b_{2^{i-1}}\leq \frac x{M_{2^{i-1}}\prod\limits_{j\in A_{k,i}-\{2^{i-1}\}} (M_jb_j)}} 1
    \end{align*}
    By writing
    $$
    \sum_{b_{2^{i-1}}\leq \frac x{M_{2^{i-1}}\prod\limits_{j\in A_{k,i}-\{2^{i-1}\}} (M_jb_j)}} 1=\frac x{M_{2^{i-1}}\prod\limits_{j\in A_{k,i}-\{2^{i-1}\}} (M_jb_j)}+O\left(\left[\frac x{M_{2^{i-1}}\prod\limits_{j\in A_{k,i}-\{2^{i-1}\}} (M_jb_j)}\right]^{1-\alpha}\right)
    $$
    for $\alpha<1/2$, the $O$-terms (without loss of generality) contribute the following before summing over $m_i$'s.
    \begin{align*}
    \sum_{\forall_{i\leq k},\prod\limits_{j\in A_{k,i}-\{2^{i-1}\}}(M_jb_j)\leq x}&\frac {x^{k-\alpha}}{\left[M_1 \prod\limits_{j\in A_{k,1}-\{1\}} (M_jb_j)\right]^{1-\alpha}\prod\limits_{\substack{{j\in V_k-A_{k,1}}\\{j\neq 2^t}}}(M_jb_j) \prod\limits_{\substack{{j=2^t}\\{j>1}}}M_j}\\
    &=O\left(\frac{x^{k-\alpha}\log^{2^{k-1}-k} x\sum\limits_{n\leq x}n^{\alpha-1}\tau_{2^{k-1}-1}(n)}{\left[M_1\prod\limits_{j\in A_{k,1}-\{1\}}M_j\right]^{1-\alpha}\prod\limits_{\substack{{j\in V_k-A_{k,1}}\\{j\neq 2^t}}}M_j \prod\limits_{\substack{{j=2^t}\\{j>1}}}M_j}\right)\\
    &=O\left(\frac{x^k\log^{2^k-k-2} x}{\left[\prod\limits_{j\in A_{k,1}}M_j\right]^{1-\alpha}\prod\limits_{j\in V_k-A_{k,1}}M_j}\right).
    \end{align*}
    We have the convergence of the sum over $m_i$'s due to $\alpha<1/2$. Then we have $V_k(x)$ up to an error of $O(x^k \log^{2^k-k-2} x)$:
    \begin{align*}
    V_k(x)&=x^k\sum_{m_1,\ldots, m_{e_k}=1}^{\infty} \frac{\prod\limits_{j\leq e_k}\mu(m_j)}{\prod\limits_{j\leq v_k}M_j} \sum_{\forall_{i\leq k},\prod\limits_{j\in A_{k,i}-\{2^{i-1}\}}(M_jb_j)\leq x} \frac1{\prod\limits_{\substack{{j\in V_k}\\{j\neq 2^t}}}b_j}+O(x^k \log^{2^k-k-2} x)\\
    &=x^k  \rho(G_k) \mathrm{vol}(D^{**}_k) \log^{2^k-k-1}x +O(x^k\log^{2^k-k-2} x)
    \end{align*}
    where $D^{**}_k$ is a convex polytope defined by the hyperbolic constraints $\{A_i-\{2^{i-1}\}\}_{i\leq k}$,
    $$
    D^{**}_k=\left\{(t_i)_{\substack{{i\leq {v_k}}\\{i\neq 2^t}}}\in [0,\infty)^{2^k-k-1} \ \bigg\vert  \forall_{i\leq k}, \ \sum_{j\in A_{k,i}-\{2^{i-1}\}}t_j \leq 1 \right\}.
    $$

    \subsection{Multivariable Perron's formula}
    A multivariable Perron's formula relates the sum of multivariable arithmetic function
    $$
    \sum_{n_1,\ldots, n_k\leq x} f(n_1,\ldots, n_k)
    $$
    to an integral over some vertical lines of complex planes. A general result yielding an asymptotic formula with power-saving error term was proved by de la Bret\`{e}che~\cite[Theorem 1]{dlB}. An effective version of Perron's formula was proved by Balazard, Naimi, P\'{e}termann~\cite[Proposition 5]{BNP} in which they provided an asymptotic formula for the sum
    $$
    \sum_{n_1,\ldots, n_k\leq x} \frac{\mu(n_1)\cdots \mu(n_k)}{[n_1,\ldots,n_k]}.
    $$
    T\'{o}th and Zhai~\cite{TZ} proved an asymptotic formula for the average number of subgroups of $\Z/m\Z\times\Z/n\Z$, $m,n\leq x$ by using the effective Perron's formula.
    We state the effective version of Perron's formula~\cite[Proposition 5]{BNP} here.
    \begin{prop}
    Let $f:\N^k \rightarrow \C$ be an arithmetic function of $k$ variables and $(\sigma_{a_1}, \ldots, \sigma_{a_k})$ an $k$-tuple of the abscissas of absolute convergence of the associated Dirichlet series
    $$F(s_1,\ldots, s_k)=\sum_{n_1,\ldots, n_k=1}^{\infty}\frac{f(n_1,\ldots, n_k)}{{n_1}^{s_1}\cdots {n_k}^{s_k}}.$$ We have for $x_1\geq 1$, $\ldots \ $, $x_k\geq 1$, $\kappa_1>\max(0,\sigma_{a_1})$, $\ldots \ $, $\kappa_k>\max(0,\sigma_{a_k})$, $T_1\geq 1$, $\ldots \ $, $T_k\geq 1$,
    $$
    \left|\sideset{}{'}\sum_{n_1\leq x_1, \ldots, n_k\leq x_k}  f(n_1,\ldots, n_k)-\frac1{(2\pi i)^k}\int\displaylimits_{\kappa_k-iT_k}^{\kappa_k+iT_k}\cdots \int\displaylimits_{\kappa_1-iT_1}^{\kappa_1+iT_1}F(s_1,\ldots, s_k)\frac{{x_1}^{s_1}\cdots {x_k}^{s_k}}{s_1\cdots s_k} ds_1\cdots ds_k\right|$$
    \begin{align*}\leq x_1^{\kappa_1}\cdots x_k^{\kappa_k} &\sum_{n_1,\ldots, n_k=1}^{\infty} \frac{|f(n_1,\ldots, n_k)|}{{n_1}^{\kappa_1}\cdots {n_k}^{\kappa_k}}
    Q_k\left(\frac1{(\pi T_1 |\log(x_1/n_1)|)^*},\ldots,\frac1{(\pi T_k |\log(x_k/n_k)|)^*}\right)
    \end{align*}
    where the primed sum takes the value of $f(n_1,\ldots, n_k)/2^m$ if the number of $i\leq k$ with $n_i=x_i$ is m, the multivariable polynomial $Q_k$ is defined by
    $$
    Q_k(X_1,\ldots, X_k)=(1+X_1)\cdots (1+X_k)-1,
    $$
    and $y^*=\max(1,y)$.
    \end{prop}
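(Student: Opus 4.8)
Proposition~3.1 is quoted from~\cite[Proposition 5]{BNP}; the natural route to it reduces the $k$-variable statement to the one-variable effective Perron formula, and that is the plan I would follow. The engine is the classical truncated Perron integral in its sharp form: for real $y>0$, $\kappa>0$ and $T\geq 1$,
$$
\left|\frac1{2\pi i}\int_{\kappa-iT}^{\kappa+iT}\frac{y^s}{s}\,ds-\delta(y)\right|\leq \frac{y^\kappa}{(\pi T|\log y|)^*},
$$
where $\delta(y)=1,\tfrac12,0$ according as $y>1$, $y=1$, $y<1$ and $u^*=\max(1,u)$; this comes from the usual contour shift (push the segment to $\Re s=-\infty$ when $y<1$ and to $\Re s=+\infty$ when $y>1$, collecting the residue at $s=0$, and bounding the horizontal pieces), the $(\cdot)^*$-truncation absorbing the range where $|\log y|$ is small. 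I would also record the elementary by-cases inequality $|\delta(y)|\leq y^\kappa$ for every $y>0$; this is the small observation that will let me pull $x_1^{\kappa_1}\cdots x_k^{\kappa_k}\prod_j n_j^{-\kappa_j}$ outside everything at the end.

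Next I would rewrite the $k$-fold truncated integral as a sum over $n$ of a product of one-variable kernels. Since $F$ converges absolutely on $\Re s_j=\kappa_j>\sigma_{a_j}$ and the rectangle of integration is compact, Fubini gives
$$
\frac1{(2\pi i)^k}\int_{\kappa_k-iT_k}^{\kappa_k+iT_k}\!\!\cdots\!\int_{\kappa_1-iT_1}^{\kappa_1+iT_1}F(s_1,\dots,s_k)\frac{x_1^{s_1}\cdots x_k^{s_k}}{s_1\cdots s_k}\,ds_1\cdots ds_k=\sum_{n_1,\dots,n_k\geq 1}f(n_1,\dots,n_k)\prod_{j=1}^{k}I_j,
$$
where $I_j=\frac1{2\pi i}\int_{\kappa_j-iT_j}^{\kappa_j+iT_j}(x_j/n_j)^{s_j}s_j^{-1}\,ds_j$. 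Write $I_j=\delta_j+E_j$ with $\delta_j=\delta(x_j/n_j)$ and, by the one-variable bound, $|E_j|\leq (x_j/n_j)^{\kappa_j}X_j$ where $X_j:=1/(\pi T_j|\log(x_j/n_j)|)^*\in(0,1]$. Because $\delta_j=1$ for $n_j<x_j$, $=\tfrac12$ for $n_j=x_j$, $=0$ for $n_j>x_j$, the diagonal term $\sum_n f(n)\prod_j\delta_j$ is exactly the primed sum $\sideset{}{'}\sum_{n_j\le x_j\,\forall j}f(n)$ with the $2^{-m}$ convention, so the quantity to bound is $\sum_n f(n)\bigl(\prod_j I_j-\prod_j\delta_j\bigr)$.

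Finally I would expand the telescoping difference by inclusion--exclusion, $\prod_j(\delta_j+E_j)-\prod_j\delta_j=\sum_{\emptyset\neq S\subseteq\{1,\dots,k\}}\prod_{j\in S}E_j\prod_{j\notin S}\delta_j$, and estimate termwise: using $|E_j|\le (x_j/n_j)^{\kappa_j}X_j$ for $j\in S$ and $|\delta_j|\le (x_j/n_j)^{\kappa_j}$ for $j\notin S$, each summand is at most $\prod_j(x_j/n_j)^{\kappa_j}\prod_{j\in S}X_j$, whence $\bigl|\prod_jI_j-\prod_j\delta_j\bigr|\le \prod_j(x_j/n_j)^{\kappa_j}\bigl(\prod_j(1+X_j)-1\bigr)=\prod_j(x_j/n_j)^{\kappa_j}\,Q_k(X_1,\dots,X_k)$. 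Summing this against $|f(n)|$ and factoring out $x_1^{\kappa_1}\cdots x_k^{\kappa_k}$ produces precisely the inequality of Proposition~3.1, and the series on the right converges because $X_j\le 1$ keeps $Q_k\le 2^k-1$ bounded while $\sum_n|f(n)|\prod_j n_j^{-\kappa_j}<\infty$ by the choice of the $\kappa_j$. The only genuinely delicate ingredient is the sharp one-variable kernel estimate with the $(\cdot)^*$-truncation near $y=1$ (equivalently $n_j$ close to $x_j$); once that is in hand, everything above is bookkeeping with the product expansion, the boundary convention, and Fubini.
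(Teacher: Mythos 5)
Your proposal is correct, and it is worth noting that the paper itself does not prove Proposition 3.1 at all: it is quoted verbatim from Balazard--Naimi--P\'etermann \cite[Proposition 5]{BNP}, so there is no internal proof to compare against. Your reconstruction is essentially the standard argument behind that result: absolute convergence at $(\kappa_1,\ldots,\kappa_k)$ justifies the Fubini interchange on the compact rectangle, the one-variable sharp kernel bound $\bigl|\frac1{2\pi i}\int_{\kappa-iT}^{\kappa+iT}y^s\,\frac{ds}{s}-\delta(y)\bigr|\leq y^{\kappa}\min\bigl(1,\frac1{\pi T|\log y|}\bigr)$ (vertical-line shift with horizontal segments for $|\log y|$ large, circular-arc contour for the range where $|\log y|$ is small, and a direct computation giving $|I-\tfrac12|\leq\tfrac12$ at $y=1$) supplies $|E_j|\leq (x_j/n_j)^{\kappa_j}X_j$, and the expansion $\prod_j(\delta_j+E_j)-\prod_j\delta_j=\sum_{\emptyset\neq S}\prod_{j\in S}E_j\prod_{j\notin S}\delta_j$ together with $|\delta_j|\leq (x_j/n_j)^{\kappa_j}$ (valid because $\kappa_j>0$ by hypothesis) produces exactly the factor $Q_k(X_1,\ldots,X_k)$ and the primed-sum convention $2^{-m}$ on the diagonal. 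You correctly identify the only genuinely delicate ingredient as the truncated one-variable Perron estimate with the $(\cdot)^*$ normalization; everything else is bookkeeping, as you say.
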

    We are interested in the case where $f$ takes the following form
    \begin{equation}
    f(n_1,\ldots, n_k)=\sum_{\substack{{\forall_r, a_r\in \N}\\{G\textrm{-wise coprime}}\\{\forall_{i\leq k}, \ \prod_{j\in A_i} a_j = n_i}}}\frac1{a_1\cdots a_v}
    \end{equation}
    where $G=(V,E)=(\{1,\ldots, v\}, \{\epsilon_1,\ldots, \epsilon_e\})$ is a graph and $A_i, i\leq k$ are nonempty with $\cup_{i\leq k} A_i = \{1, \ldots, v\}$. The associated Dirichlet series of the arithmetic function $f$ above satisfies the following.
    \begin{lemma}
    Let $f$ be the arithmetic function defined in (13), ${\bf s}=(s_1,\ldots, s_k)\in \C^k$, and ${\bf e}_i, i\leq k$ be the standard basis of $\C^k$. Then
    \begin{equation}
    F(s_1,\ldots, s_k)=\sum_{n_1,\ldots, n_k=1}^{\infty}\frac{f(n_1,\ldots, n_k)}{{n_1}^{s_1}\cdots {n_k}^{s_k}}=H(s_1,\ldots, s_k)\prod_{r\leq v} \zeta(1+\ell_r({\bf s}))
    \end{equation}
    converges absolutely if $\Re(s_i)>0$ for each $i\leq k$, the Dirichlet series of $H$ converges absolutely if $\Re(s_i)>-\frac1{2k}$ for each $i\leq k$, and the linear forms $\ell_r({\bf s})$ is defined by
    $$
    \ell_r({\bf s})=\sum_{i\leq k} \ell_r({\bf e}_i) s_i
    $$
    where
    $$
    \ell_r({\bf e}_i)=\begin{cases} 1 &\mbox{ if }r\in A_i\\ 0 &\mbox{ otherwise.}\end{cases}
    $$
    \end{lemma}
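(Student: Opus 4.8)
The plan is to recognise $F$ directly as a Dirichlet series over the $G$-wise coprime tuples $(a_1,\dots,a_v)$, to strip the coprimality by the same M\"obius insertion used to derive (9), to factor out a product of zeta values, and then to control the remaining factor $H$ through its Euler product. First I substitute (13) and interchange the summations,
$$
F(\mathbf{s})=\sum_{\substack{a_1,\dots,a_v\in\N\\ G\text{-wise coprime}}}\frac1{a_1\cdots a_v}\prod_{i\le k}\Big(\prod_{j\in A_i}a_j\Big)^{-s_i}
=\sum_{\substack{a_1,\dots,a_v\in\N\\ G\text{-wise coprime}}}\ \prod_{r\le v}a_r^{-(1+\ell_r(\mathbf{s}))},
$$
because the total exponent of $a_r$ is $1+\sum_{i:\,r\in A_i}s_i=1+\ell_r(\mathbf{s})$. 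Since $\bigcup_i A_i=V$, every $r$ lies in at least one $A_i$, so $\Re(s_i)>0$ for all $i$ forces $\Re(\ell_r(\mathbf{s}))>0$ for all $r$, and the last sum is dominated term-by-term by $\prod_{r\le v}\zeta\big(1+\Re\ell_r(\mathbf{s})\big)<\infty$; this establishes the absolute convergence of $F$ on $\Re(s_i)>0$.

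Next I insert $\sum_{d\mid a_i,\,d\mid a_j}\mu(d)$ for each edge $(i,j)\in E$, collect the divisibility conditions at each vertex $r$ into one condition $M_r\mid a_r$, and write $a_r=M_rb_r$; the sum over $\mathbf{b}$ factors and produces (14) with
$$
H(\mathbf{s})=\sum_{m_1,\dots,m_e=1}^{\infty}\mu(m_1)\cdots\mu(m_e)\prod_{r\le v}M_r^{-(1+\ell_r(\mathbf{s}))}.
$$
The regrouping $\prod_r M_r^{-(1+\ell_r(\mathbf{s}))}=\big(\prod_r M_r\big)^{-1}\prod_{i\le k}\big(\prod_{j\in A_i}M_j\big)^{-s_i}$ exhibits $H$ as a genuine $k$-variable Dirichlet series, say $H(\mathbf{s})=\sum_{N_1,\dots,N_k}h(N_1,\dots,N_k)\,N_1^{-s_1}\cdots N_k^{-s_k}$, and since $\mu$ is supported on squarefree integers and each map $(m_1,\dots,m_e)\mapsto M_r$ is multiplicative coordinatewise, the summand is multiplicative on $\N^e$; as in \cite[Proposition 2]{RH} this gives the Euler product $H(\mathbf{s})=\prod_{p}H_p(\mathbf{s})$ with
$$
H_p(\mathbf{s})=\sum_{F\subseteq E}(-1)^{|F|}\prod_{r\in v(F)}p^{-(1+\ell_r(\mathbf{s}))},
$$
since $\nu_p(M_r)=1$ exactly when $r$ is non-isolated in $F$; at $\mathbf{s}=0$ this reduces to $Q_G(1/p)$ and recovers $\rho(G)$. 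All these rearrangements, and the identity (14) itself, are legitimate as soon as the $\mathbf{m}$-sum converges absolutely, which is the content of the next paragraph (and which in particular covers $\Re(s_i)>0$).

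The core of the proof is estimating the local factors. For $F\ne\emptyset$ the edge set $F$ has at least $|v(F)|\ge2$ non-isolated vertices, while for every vertex $r$ the set $\{i:r\in A_i\}$ has between $1$ and $k$ elements; hence, whenever $\Re(s_i)>-\tfrac1{2k}$ for all $i$, we have $\Re\ell_r(\mathbf{s})=\sum_{i:\,r\in A_i}\Re(s_i)>-\tfrac12$ for every $r$, so
$$
\Re\Big(\sum_{r\in v(F)}\big(1+\ell_r(\mathbf{s})\big)\Big)>|v(F)|-\tfrac{|v(F)|}{2}=\tfrac{|v(F)|}{2}\ge1,
$$
and on any compact subset of $\{\Re(s_i)>-\tfrac1{2k}\}$ this holds with a uniform gap $1+\delta$, $\delta>0$. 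Thus $H_p(\mathbf{s})=1+O(p^{-1-\delta})$ — the coefficient of $p^{-1}$ vanishing identically because $|v(F)|\ge2$ — and the same estimate with the signs dropped gives $\sum_{F\subseteq E}\prod_{r\in v(F)}p^{-(1+\Re\ell_r(\mathbf{s}))}=1+O(p^{-1-\delta})$. Bounding the coefficients $h(N_1,\dots,N_k)$ term-by-term by this absolute version and multiplying over $p$,
$$
\sum_{N_1,\dots,N_k\ge1}|h(N_1,\dots,N_k)|\,N_1^{-\Re(s_1)}\cdots N_k^{-\Re(s_k)}\ \le\ \sum_{m_1,\dots,m_e\ge1}|\mu(m_1)\cdots\mu(m_e)|\prod_{r\le v}M_r^{-(1+\Re\ell_r(\mathbf{s}))}\ =\ \prod_p\big(1+O(p^{-1-\delta})\big)\ <\ \infty,
$$
which is the asserted absolute convergence of the Dirichlet series of $H$ on $\Re(s_i)>-\tfrac1{2k}$.

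The main obstacle is exactly this last estimate: everything else is bookkeeping in the spirit of \cite{RH}, but one must check that even the slowest-decaying term — a single edge both of whose endpoints lie in all $k$ of the sets $A_i$ — still has $p$-exponent with real part exceeding $1$, and that it is this worst case which pins the abscissa of absolute convergence of $H$ at $-\tfrac1{2k}$ rather than somewhere larger. The estimate rests on just two elementary facts, $|v(F)|\ge2$ for $F\ne\emptyset$ and $\#\{i:r\in A_i\}\le k$, and a little care is needed to keep the gap $\delta>0$ uniform on compact subsets, since such uniformity is what is required when $H$ is subsequently fed into the effective Perron's formula.
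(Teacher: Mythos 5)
Your proposal is correct and follows essentially the same route as the paper: expand $F$ over $G$-wise coprime tuples, insert the M\"obius factors over the edges so the $b$-sum factors into $\prod_{r\le v}\zeta(1+\ell_r({\bf s}))$, and identify $H$ as the $m$-sum, whose absolute convergence for $\Re(s_i)>-\tfrac1{2k}$ rests on $\Re(1+\ell_r({\bf s}))>\tfrac12$ together with the vanishing of the degree-one local terms (the paper cites the multiplicativity of $f_G$ and $f_G(p)=0$ from Section 3, which is exactly your $|v(F)|\ge 2$ local-factor estimate). Your write-up just makes the Euler-product bookkeeping explicit; no gap.
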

    \begin{proof}
    The absolute convergence of $F$ when $\Re(s_i)>0$ for each $i\leq k$ is clear.
    We have by the method of Section 2,
    \begin{align*}
    \sum_{n_1,\ldots, n_k=1}^{\infty}&\frac{f(n_1,\ldots, n_k)}{{n_1}^{s_1}\cdots {n_k}^{s_k}}=\sum_{\substack{{\forall_r, a_r\in \N}\\{G\textrm{-wise coprime}}}}\frac1{a_1^{1+\ell_1({\bf s})}\cdots a_v^{1+\ell_v({\bf s})}}\\
    &=\sum_{m_1,\ldots, m_e=1}^{\infty} \frac{\mu(m_1)\cdots \mu(m_e)}{M_1^{1+\ell_1({\bf s})}\cdots M_v^{1+\ell_v({\bf s})}}\sum_{b_1,\ldots, b_v=1}^{\infty} \frac1{b_1^{1+\ell_1({\bf s})}\cdots b_v^{1+\ell_v({\bf s})} }.
    \end{align*}
    The inner sum over $b_i$ is $\prod_{r\leq v} \zeta(1+\ell_r({\bf s}))$. By the analysis of the multiplicative function $f_G(m)$ in the beginning of Section 3, the outer sum over $m_i$ defines a Dirichlet series which converges absolutely if $\Re(1+\ell_r({\bf s}))>1/2$ for each $r\leq v$. This is satisfied if $\Re(s_i)>-\frac1{2k}$ for each $i\leq k$. Denote this outer sum by $H(s_1,\ldots, s_k)$, then we have the result.
    \end{proof}
    The following are the well-known upper bound and mean value results for the Riemann zeta function. Similar results were also used in~\cite{TZ}.
    \begin{lemma}
    Fix $\delta>0$, $M, N\geq 0$, and let $s=\sigma+it$. For any $\epsilon>0$, there is an absolute constant $A>0$ depending on $\delta$, $\epsilon$, $M$, and $N$ such that for all $j\leq N$, $1/2+\delta\leq \sigma \leq 1+\delta$, $t\geq 1$, and $T\geq 1$,
    \begin{equation}
    |\zeta^{(j)} (s)|\leq A |t|^{\frac13 \max(1-\sigma,0)+\epsilon}
    \end{equation}
    and
    \begin{equation}
    \int_{-T}^{T} |\zeta^{(j)}(\sigma+it)|^M dt \leq A \ T^{\frac{\max(M-4,0)}3\max(1-\sigma,0)+1+\epsilon}.
    \end{equation}
    \end{lemma}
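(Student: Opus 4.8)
The statement packages together two classical facts about $\zeta$ and its derivatives, so the plan is to reduce the pointwise bound (15) to the Weyl subconvexity estimate together with the Phragm\'{e}n--Lindel\"{o}f convexity principle, reduce the mean value bound (16) to the fourth power moment of $\zeta$ together with (15), and in both cases pass from $\zeta$ to $\zeta^{(j)}$ by Cauchy's integral formula with a suitably shrinking radius.

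For (15) with $j=0$ I would start from the Weyl--van der Corput bound $\zeta(\tfrac12+it)\ll_\epsilon|t|^{1/6+\epsilon}$, the estimate $\zeta(1+it)\ll\log|t|$, and the trivial fact that $\zeta(\sigma+it)$ is bounded for $\sigma\ge1+\delta$. Applying Phragm\'{e}n--Lindel\"{o}f separately on the strips $\tfrac12\le\sigma\le1$ and $1\le\sigma\le1+\delta$ --- after removing the principal part $\tfrac1{s-1}$, which is $O(1)$ for $|t|\ge1$ --- the linear interpolant of the boundary exponents equals $\tfrac{1-\sigma}{3}$ on the first strip and $0$ on the second, matching $\tfrac13\max(1-\sigma,0)$ at each breakpoint; this yields $\zeta(\sigma+it)\ll_\epsilon|t|^{\frac13\max(1-\sigma,0)+\epsilon}$ uniformly for $\tfrac12\le\sigma\le1+\delta$, $|t|\ge1$. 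For $j\ge1$ I would use $\zeta^{(j)}(s)=\tfrac{j!}{2\pi i}\oint(w-s)^{-j-1}\zeta(w)\,dw$ over the circle $|w-s|=1/\log|t|$ (for $|t|$ large; the compact range $1\le|t|\le T_0$ is handled by continuity, the resulting constant absorbed into $A=A(\delta,\epsilon,N)$). Points on this circle have real part within $1/\log|t|$ of $\sigma$, so the exponent changes by at most $1/(3\log|t|)$, a bounded factor, while the $(\log|t|)^j$ produced by the radius is $\ll_\epsilon|t|^\epsilon$. This gives (15).

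For (16) I would invoke the classical fourth moment bound $\int_0^T|\zeta(\sigma+it)|^4\,dt\ll_\epsilon T^{1+\epsilon}$, uniformly for $\sigma\ge\tfrac12$ (Ingham; Titchmarsh, \emph{The Theory of the Riemann Zeta-function}, Ch.~VII). When $M\le4$, H\"{o}lder's inequality turns this into $\int|\zeta(\sigma+it)|^M\,dt\ll_\epsilon T^{1+\epsilon}$, which is (16) since then $\max(M-4,0)=0$. When $M>4$, I would split $|\zeta|^M=|\zeta|^{M-4}\cdot|\zeta|^4$, bound the first factor pointwise by (15), and integrate the fourth power, obtaining $\int|\zeta(\sigma+it)|^M\,dt\ll_\epsilon T^{\frac{M-4}{3}\max(1-\sigma,0)+1+\epsilon}$; this is (16) for $j=0$ (the $t$-integral understood over $1\le|t|\le T$, so as to avoid the pole at $s=1$). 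For $j\ge1$ I would use the averaged Cauchy bound $|\zeta^{(j)}(\sigma+it)|\le\tfrac{j!}{2\pi r^j}\int_0^{2\pi}|\zeta(\sigma+r\cos\phi+i(t+r\sin\phi))|\,d\phi$, raise it to the $M$-th power via H\"{o}lder, integrate in $t$, interchange the integrations, and apply the case $j=0$ on the slightly wider strip $[\sigma-r,\sigma+r]$ and interval $[-T-r,T+r]$. Taking $r=1/\log T$ makes $r^{-jM}=(\log T)^{jM}\ll_\epsilon T^\epsilon$, while the widening of the strip perturbs the exponent by at most $O(r)$, i.e.\ by a bounded multiplicative factor; small $T$ is again handled by continuity. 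This completes (16).

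I do not expect a genuine obstacle, since both inputs --- the Weyl bound and the fourth moment --- are standard. The points that require attention are the uniformity in $\sigma$ across the closed strip, which is exactly why Phragm\'{e}n--Lindel\"{o}f is applied on $[\tfrac12,1]$ and $[1,1+\delta]$ separately (so the interpolation is exact at $\sigma=1$, where $\max(1-\sigma,0)$ has its corner), and the transfer from $\zeta$ to $\zeta^{(j)}$ without introducing spurious powers of $|t|$ or $T$, which is arranged by letting the Cauchy radius shrink like $1/\log|t|$ (respectively $1/\log T$) so that the $r^{-j}$ and $r^{-jM}$ factors remain sub-polynomial and the strip-widening error stays bounded.
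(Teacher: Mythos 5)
Your proposal is correct and follows essentially the same route as the paper's proof: the Weyl bound (Titchmarsh (5.12)) combined with the Phragm\'{e}n--Lindel\"{o}f principle for (15) with $j=0$, Cauchy's integral formula for the derivatives, and for (16) the splitting $|\zeta|^{M}=|\zeta|^{\max(M-4,0)}|\zeta|^{\min(M,4)}$ handled by the pointwise bound together with the fourth-moment estimate and its mean-value convexity in $\sigma$ (Titchmarsh, Theorem 7.5 and p.~149). Your additional care with the Cauchy radius $1/\log|t|$, the H\"{o}lder step for $M\le 4$, and the pole at $s=1$ only fills in details the paper leaves implicit.
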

    \begin{proof}
    If $j=0$, (15) follows from~\cite[(5.12)]{Ti} and Phragm\'{e}n-Lindel\"{o}f principle. For $j>0$, the result follows by Cauchy's integral formula. If $j=0$, use
    $$
    |\zeta(\sigma+it)|^M=|\zeta(\sigma+it)|^{\max(M-4,0)} |\zeta(\sigma+it)|^{\min(M,4)}.
    $$
    Then (16) follows from~\cite[Theorem 7.5]{Ti} and the analogous result for the mean values in~\cite[page 149]{Ti} similar to Phragm\'{e}n-Lindel\"{o}f principle. Again, the result for $j>0$ follows by Cauchy's integral formula.
    \end{proof}
    For the values of $\zeta^{(j)}(s)$ when $s$ is close to the line $\Re(s)=1$, we have the following result.
    \begin{lemma}
    Let $A$ be any positive constant. Fix $N\geq 0$, and let $s=\sigma+it$ with $t\geq t_0$. Then there are absolute constants $B>0$ and $M>0$ depending on $A$ and $N$ such that for all $j\leq N$,
    $$
    |\zeta^{(j)}(s)|\leq B \log^M t
    $$
    uniformly in the region
    $$
    1-\frac A{\log t}\leq \sigma \leq 2, \ t\geq t_0.
    $$
    \end{lemma}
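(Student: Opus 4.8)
The plan is to reduce the lemma to its case $j=0$ — a bound for $|\zeta|$ alone that is valid right up to the line $\Re s=1$ — and then to recover the derivatives $j\ge1$ from Cauchy's integral formula on circles of radius of order $1/\log t$. Note that Lemma 3.3 is of no use here: its estimate $|\zeta^{(j)}(s)|\ll|t|^{\frac13\max(1-\sigma,0)+\epsilon}$ only yields $|t|^{\epsilon}$ on the present strip, far weaker than a power of $\log t$.

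For $j=0$ I would start from the Euler--Maclaurin expansion
$$
\zeta(s)=\sum_{n\le N}\frac1{n^{s}}-\frac{N^{1-s}}{1-s}-\frac{N^{-s}}{2}-s\int_N^{\infty}\frac{\{u\}-\tfrac12}{u^{s+1}}\,du,
$$
valid for $\sigma>0$ and every integer $N\ge1$, with the last integral of size $O\!\bigl((|t|+1)N^{-\sigma-1}\bigr)$, and I would take $N=\lfloor t\rfloor$. Everything hinges on the elementary inequality
$$
t^{1-\sigma}=\exp\bigl((1-\sigma)\log t\bigr)\le\exp\!\Bigl(\frac{A}{\log t}\cdot\log t\Bigr)=e^{A},
$$
valid throughout the region: since $n\le t$ forces $n^{1-\sigma}\le t^{1-\sigma}\le e^{A}$, the leading sum obeys $\sum_{n\le t}n^{-\sigma}\le e^{A}\sum_{n\le t}n^{-1}\ll e^{A}\log t$, while, once $t$ exceeds a constant depending on $A$ (so that $\sigma\ge\tfrac12$, $|s|\ll t$, and $|1-s|\ge t$), the three remaining terms are each $O(e^{A})$ by the same inequality. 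As $|\zeta(s)|=O(1)$ for $\sigma\ge2$ anyway, this establishes: for every $A>0$ there are $B_{0}=B_{0}(A)$ and $t_{0}=t_{0}(A)$ such that $|\zeta(\sigma+it)|\le B_{0}\log t$ whenever $\sigma\ge1-A/\log t$ and $t\ge t_{0}$ — which is the lemma with $M=1$ in the case $N=0$.

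For $j\ge1$ I would apply Cauchy's formula on the circle $|w-s|=r$ with $r=1/\log t$ (which, for $t$ large, is disjoint from the pole at $s=1$):
$$
\zeta^{(j)}(s)=\frac{j!}{2\pi i}\oint_{|w-s|=r}\frac{\zeta(w)}{(w-s)^{j+1}}\,dw.
$$
On this circle $\Re w\ge\sigma-r\ge1-(A+1)/\log t$ and $\tfrac12t\le\Im w\le2t$, so $\log\Im w\asymp\log t$ and $\Re w\ge1-2(A+1)/\log\Im w$; hence the bound just proved applies to $w$ with parameter $2(A+1)$, giving $|\zeta(w)|\ll\log t$ on the circle, the implied constant depending only on $A$. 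Estimating the integral by (length)$\times$(maximum) then gives $|\zeta^{(j)}(s)|\ll(\log t)\,r^{-j}\ll\log^{j+1}t$, now with implied constant depending on $A$ and $j$. Taking $M=N+1$ and $B$ equal to the largest of the finitely many implied constants for $0\le j\le N$ completes the argument; the residual compact range of $t$ (between $t_{0}$ and the larger threshold required above) contributes nothing, since there $s$ stays in a fixed compact set avoiding $s=1$ while $\log^{M}t$ is bounded away from $0$.

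The one genuinely delicate step is the $j=0$ bound, and inside it the single observation that the strip width $A/\log t$ is precisely what keeps $t^{1-\sigma}$ — and hence, after collapsing $n^{-\sigma}$ to $n^{-1}$, the entire main term — bounded by an $A$-dependent constant. Everything after that (the choice $N\asymp t$, verifying the Cauchy circle lies in an admissible region, and tracking the dependence on $j$) is routine bookkeeping.
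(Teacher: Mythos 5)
Your proof is correct, but it takes a different route from the paper, which simply cites Titchmarsh: Theorem 3.5 there for $j=0$, the estimate (3.5.5) for $j=1$, and the remark that higher derivatives are handled analogously. You instead give a self-contained argument: the Euler--Maclaurin (approximate) formula with $N=\lfloor t\rfloor$, where the single inequality $t^{1-\sigma}\le e^{A}$ in the strip $\sigma\ge 1-A/\log t$ collapses the main sum to $e^{A}\sum_{n\le t}n^{-1}\ll e^{A}\log t$ and makes the boundary and remainder terms $O(e^{A})$ (indeed even the trivial bound $|s|\int_N^\infty u^{-\sigma-1}\,du\ll t^{1-\sigma}$ suffices for the tail), recovering the $j=0$ case; then Cauchy's formula on circles of radius $1/\log t$ yields all derivatives at once, with the harmless enlargement of $A$ to $2(A+1)$ on the circle and the bound $\ll \log^{j+1}t$, i.e.\ $M=N+1$. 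This is essentially the textbook proof of Titchmarsh's Theorem 3.5 reproduced in full, combined with a cleaner, uniform treatment of the derivatives: where the paper (and Titchmarsh) would redo the direct estimate for each $\zeta^{(j)}$, your Cauchy-circle step handles every $j\le N$ in one stroke and makes the dependence of $B$ and $M$ on $A$ and $N$ completely explicit. Your closing remarks correctly dispose of the two minor boundary issues (the circle protruding past $\sigma=2$, handled by $|\zeta|=O(1)$ there, and the compact range $t_0\le t\le t_0(A)$, handled by continuity away from $s=1$), so there is no gap.
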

    \begin{proof}
    For $j=0$, this is~\cite[Theorem 3.5]{Ti}. For $j=1$, the proof is in~\cite[(3.5.5)]{Ti}. The proofs for the higher derivatives are analogous.
    \end{proof}
    We establish a version of Perron's formula for the Dirichlet series (14). The following is a special case of~\cite[Theorem 1]{dlB}. The purpose of the following proposition is to find an explicit power-saving error term with the exponent of $x$ is depending on $k$ and $v$. Our assumptions are more restrictive than that of~\cite[Theorem 1]{dlB}. We use the notations ${\bf n}=(n_1,\ldots, n_k)\in \N^k$, $\boldsymbol{\alpha}=(\alpha_1,\ldots, \alpha_k)\in [0,\infty)^k$, ${\bf 0}=(0,\ldots, 0)\in\C^k$, ${\bf 1}=(1,\ldots, 1)\in \C^k$, ${\bf n}^{\boldsymbol{\alpha}}={n_1}^{\alpha_1}\cdots {n_k}^{\alpha_k}$, and $J(\boldsymbol{\alpha})=\{j\leq k \ | \ \alpha_j=0\}$.
    \begin{prop}
    Let $f:\N^k\rightarrow \R$ be an arithmetic function satisfying the following conditions.

    i) The associated Dirichlet series
    $$
    F(s_1,\ldots, s_k)=\sum_{n_1,\ldots, n_k=1}^{\infty}\frac{f(n_1,\ldots, n_k)}{{n_1}^{s_1}\cdots {n_k}^{s_k}}=H(s_1,\ldots, s_k)\prod_{r\leq v} \zeta(1+\ell_r({\bf s})),
    $$
    converges absolutely if $\Re(s_i)>0$ for each $i\leq k$, the Dirichlet series of $H$ converges absolutely if $\Re(s_i)>-\frac1{2k}$ for each $i\leq k$, and $\ell_r({\bf s})$, $r\leq v$ are nonzero linear forms.

    ii) There is a constant $M=M_f\in\N$ such that for all $i\leq k$ and $0<\kappa_j<1$,
    \begin{equation}
    \sum_{\substack{{\forall_{j\leq k}, n_j\in\N}\\{j\neq i}}} \frac{|f(n_1,\ldots, n_{i-1}, n_i, n_{i+1},\ldots, n_k)|}{{n_1}^{\kappa_1}\cdots {n_{i-1}}^{\kappa_{i-1}}{n_{i+1}}^{\kappa_{i+1}}\cdots {n_k}^{\kappa_k}} \leq \frac{M\tau_v(n_i)}{(\kappa_1\cdots \kappa_{i-1})^M  \ n_i \ (\kappa_{i+1}\cdots \kappa_k)^M}.\end{equation}
    where $\tau_v(m)=\sum_{a_1\cdots a_v=m} 1$ is the divisor function.

    Then we have
    $$
    \sum_{n_1,\ldots, n_k\leq x} {\bf n}^{\boldsymbol{\alpha}}f(n_1,\ldots, n_k)=x^{\boldsymbol{\alpha}\cdot {\bf 1}} P_{v+w-\ell}(\log x)+O(x^{\boldsymbol{\alpha}\cdot {\bf 1}-\frac{v_*}C+\epsilon})
    $$
    where $w=|J(\boldsymbol{\alpha})|$, $\ell=\mathrm{rank}\left(\{\ell_r \ | \ r\leq v\} \cup \{{\bf e}_j^* \ | \ j\in J(\boldsymbol{\alpha}), \ j\leq k\}\right)$, $v_*=\frac 3{6k+\max(v-4,0)}$,

    Here, $C\geq 1$ will be defined in the proof.
    \end{prop}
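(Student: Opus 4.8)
The plan is to apply the effective Perron formula of Proposition~3.1 to $g(n_1,\dots,n_k)={\bf n}^{\boldsymbol{\alpha}}f(n_1,\dots,n_k)$, whose Dirichlet series is the shift $G({\bf s})=F({\bf s}-\boldsymbol{\alpha})=H({\bf s}-\boldsymbol{\alpha})\prod_{r\le v}\zeta\bigl(1+\ell_r({\bf s}-\boldsymbol{\alpha})\bigr)$, absolutely convergent for $\Re(s_i)>\alpha_i$. I would invoke Proposition~3.1 with $x_1=\dots=x_k=x$, with abscissas $\kappa_i=\alpha_i+1/\log x$, and with a common truncation height $T=x^{\beta}$, where $\beta>0$ is a small parameter to be fixed at the end; hypothesis~ii) is precisely the input needed to bound the Perron error, and Lemmas~3.1, 3.2 and 3.3 supply the analytic information about $H$ and the zeta factors of $G$.

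\emph{Truncation error.} Expanding the polynomial $Q_k$ into its $2^k-1$ monomials and, in each, singling out an index $i$ carrying the factor $\tfrac1{(\pi T|\log(x/n_i)|)^*}$ (the remaining ones being $\le 1$), one sums the other $k-1$ variables by hypothesis~ii) applied to $f$ with the abscissas $\kappa_j-\alpha_j=1/\log x\in(0,1)$; this costs only a fixed power of $\log x$ through the factor $(\kappa_1\cdots\widehat{\kappa_i}\cdots\kappa_k)^{-M}$. What remains is the single-variable sum $\sum_{n_i}\tau_v(n_i)\,n_i^{-1-1/\log x}\tfrac1{(\pi T|\log(x/n_i)|)^*}$, which the usual split according to whether $n_i$ lies in $[x/2,2x]$ (counting integers with $|\log(x/n_i)|$ small, and using $\tfrac1{(\pi T|\log(x/n_i)|)^*}\ll 1/T$ otherwise) bounds by $O(x^\epsilon/T)$. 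Since $x^{\kappa_1}\cdots x^{\kappa_k}=e^{k}x^{\boldsymbol{\alpha}\cdot{\bf 1}}$, the whole truncation error is $O(x^{\boldsymbol{\alpha}\cdot{\bf 1}-\beta+\epsilon})$.

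\emph{Contour shift and main term.} I would then move each contour leftward by a common amount $\eta$, from $\Re(s_i)=\kappa_i$ to $\Re(s_i)=\alpha_i-\eta$, with $0<\eta<1/(2k)$, which keeps $H({\bf s}-\boldsymbol{\alpha})$ absolutely convergent and every $\Re\bigl(1+\ell_r({\bf s}-\boldsymbol{\alpha})\bigr)=1-\eta\,|\{i\le k:r\in A_i\}|$ above $1/2$. Shifting one variable at a time and collecting residues at each stage, the crossed singularities are the kernel poles $s_j=0$ for $j\in J(\boldsymbol{\alpha})$ (there are $w$ of them) and the zeta poles on the hyperplanes $\ell_r({\bf s})=\ell_r(\boldsymbol{\alpha})$; the resulting iterated residue is $x^{\boldsymbol{\alpha}\cdot{\bf 1}}$ times a polynomial in $\log x$, whose degree is the order of the crossed polar locus, $v+w$, diminished by the rank of the normals, so it is $P_{v+w-\ell}(\log x)$ with $\ell=\mathrm{rank}\bigl(\{\ell_r\}_{r\le v}\cup\{{\bf e}_j^*:j\in J(\boldsymbol{\alpha})\}\bigr)$ (one checks this count on the model $\prod_{r}\zeta(1+\ell_r({\bf s}))\,x^{{\bf s}\cdot{\bf 1}}/(s_1\cdots s_k)$, e.g.\ in one and two variables). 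The remaining polydisc integral over the left walls, together with the connecting horizontal segments, is estimated by bounding $H$ via Lemma~3.1, the zeta factors near $\Re=1$ via Lemma~3.3, and, on the left walls proper, by combining the convexity bound~(15) with the mean value bound~(16) taken with exponent $v$ (which is where $\max(v-4,0)$ enters) through H\"older's inequality, the kernel $1/(s_1\cdots s_k)$ being absorbed at the cost of a power of $\log x$; the outcome is $O\bigl(x^{\boldsymbol{\alpha}\cdot{\bf 1}-\eta}\,T^{B(\eta)+\epsilon}\bigr)$ with $B(\eta)=b_0+b_1\eta$ affine in $\eta$, the coefficients $b_0,b_1\ge0$ depending only on $v$ and $k$ through the exponents $\tfrac13$, $\tfrac{\max(v-4,0)}3$ of (15)--(16) and the cardinalities $|A_i|$.

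\emph{Balancing.} Finally, setting $T=x^{\beta}$ and balancing $x^{\boldsymbol{\alpha}\cdot{\bf 1}-\beta+\epsilon}$ against $x^{\boldsymbol{\alpha}\cdot{\bf 1}-\eta+\beta B(\eta)+\epsilon}$, that is taking $\beta=\eta/(1+B(\eta))$ and letting $\eta\uparrow 1/(2k)$, yields the power saving $x^{\boldsymbol{\alpha}\cdot{\bf 1}-v_*/C+\epsilon}$ with $v_*=\tfrac3{6k+\max(v-4,0)}$ and $C=C(v,k)\ge1$ the explicit constant assembled from $b_0,b_1$ and recorded at that step. The main obstacle is this final analytic step together with the residue bookkeeping of the third: one must track the order of every crossed pole through the iterated residues so that the polynomial degree is exactly $v+w-\ell$ (the forms $\ell_r$ and the coordinate functionals need not be in general position, so it is the rank, not the count, that governs the degree), and simultaneously combine the mean-value estimates across the $v$ zeta-factors — whose arguments are sums of different subsets of the $s_i$ — uniformly enough to pin down the slopes $b_0,b_1$, hence $C$.
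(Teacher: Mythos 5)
Your skeleton coincides with the paper's: apply the effective multivariable Perron formula (Proposition 3.1) to the shifted series $F(\mathbf{s}-\boldsymbol{\alpha})$, bound the truncation error via hypothesis ii), then evaluate the truncated integral by iterated contour shifts, controlling the walls with Lemmas 3.2--3.3 and balancing in $T$; your truncation estimate is essentially the paper's. The gaps are exactly at the step you yourself flag as ``the main obstacle,'' and they are not minor. First, the shift depth: you move every contour left by a common $\eta$ close to $1/(2k)$. This is not admissible. After a residue is taken at a pole $s_{j_0}=a_js_j+\cdots+a_ks_k$, that linear form is substituted into $H$, into the remaining zeta factors, and into the exponent of $x$; its coefficients can be as large as $C$ (the maximal entry of the matrices produced by the elimination), and such substitutions occur up to $k$ times. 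To keep $H$ inside its half-plane of absolute convergence $\Re(s_i)>-\tfrac1{2k}$ and the zeta factors inside the region $\sigma\geq\tfrac12+\delta$ where Lemma 3.2 applies, the individual shifts can only be of size about $\tfrac1{2kC}$; this is precisely how the constant $C$ of the statement enters the exponent (the paper takes $\delta_j=\tfrac1{2kC}+\epsilon$ and $T=x^{v_*/C}$). In your scheme $C$ instead appears as an unspecified constant ``assembled from $b_0,b_1$,'' which neither defines the $C$ promised in the statement nor yields the claimed saving; with $\eta\uparrow 1/(2k)$ the wall integrals are simply not covered by the stated lemmas.

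Second, the residue bookkeeping is not carried out, and the one concrete choice you make works against it: with equal abscissas $\kappa_i=\alpha_i+1/\log x$, distinct linear forms among the $\ell_r$ and the coordinate functionals can take equal values on the contour, so one cannot in general decide on which side of a vertical segment a pole lies, nor rule out accidental coincidences of poles; the paper avoids this by taking $\kappa_j=\kappa^j/\log x$ with $\kappa$ transcendental over the field generated by the coefficients and $\kappa<c^2/(2C^2+2)$. The iteration also requires adaptively reordering the variables according to the sign of the coefficient of the current variable in the exponent of $x$ (the paper's Cases 1 and 2), and there is a degenerate case (Case 3), when the exponent of $x$ ceases to involve the remaining variables, where no residue is extracted and one must show the leftover integral converges to a constant with error $O(\log^{M}x/T)$ by the Cauchy-sequence argument following \cite[Lemma 20]{BNP}. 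Finally, the degree $v+w-\ell$ is asserted by analogy with a model case; in the paper it is obtained from this elimination process (the rank $\ell$ is what limits how many times a power of $x$ gets differentiated), and with forms not in general position the assertion needs exactly that argument. In short, the parts you defer are the actual content of the proposition, including the definition of $C$, and the analytic step you do specify would fail as stated.
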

    \begin{proof}
    We first treat the case $\boldsymbol{\alpha} = {\bf 0 }\in [0,\infty)^k$. We apply Proposition 3.1 with $2\leq x_1=\cdots=x_k=x\in \N$, $2\leq T_1=\cdots=T_k=T\leq x$, and $0<\kappa_j< \frac1{\log x}$ for each $j\leq k$. We treat the error term in Proposition 3.1. Without loss of generality, consider a monomial term $X_1\cdots X_r$ of $Q_k$. By $0<1/y^*\leq 1$ and (17), the contribution of the terms such that some $n_i$, $i\leq r$ do  not belong to $(x/2, 2x)$ is
    $$
    \ll x^{\kappa_1+\cdots + \kappa_k} \sum_{n_1,\ldots, n_k=1}^{\infty} \frac{|f(n_1,\ldots, n_k)|}{{n_1}^{\kappa_1} \cdots {n_k}^{\kappa_k}T}\ll \frac{\Lm}T.
    $$
    For the remaining terms, without loss of generality, consider the sum over $n_1\in (x/2, 2x)-\{x\}$, $n_2,\ldots, n_k \in \N$. By $|\log(x/n)|\asymp |x-n|/x$ for $n\asymp x$ and (17), the sum of such terms satisfy
    \begin{align*}
    &\ll x^{\kappa_1+\cdots + \kappa_k} \sum_{\substack{{x/2<n_1<2x}\\{n_1\neq x}}} \sum_{n_2,\ldots, n_k=1}^{\infty} \frac{|f(n_1,\ldots, n_k)|}{{n_1}^{\kappa_1} \cdots {n_k}^{\kappa_k}}\frac x{T|x-n_1|}\\
    &\ll \sum_{\substack{{x/2<n_1<2x}\\{n_1\neq x}}} \frac{\tau_v(n_1)x\Lm}{Tn_1|x-n_1|} \ll \sum_{\substack{{x/2<n_1<2x}\\{n_1\neq x}}} \frac{\tau_v(n_1)\Lm}{T|x-n_1|}\ll \frac{x^{\epsilon}}T.
    \end{align*}
    With some more work (see~\cite[Lemma 1, 2, and 3]{BNP}), this error from the remaining terms can be improved to $O(\Lm/T)$ provided that $T\leq x^{v_*-\epsilon}$.

    The term with $n_1=x$, is by $0<1/y^*\leq 1$ and (17),
    \begin{align*}
    &\ll x^{\kappa_1+\cdots + \kappa_k} x^{-1+\epsilon}\ll x^{-1+\epsilon}.
    \end{align*}
    Therefore, the combined error term becomes $\ll T^{-1}\Lm$ when $2\leq T\leq x^{v_*-\epsilon}$.

    Also, by the assumption (17) on $f$,
    $$
    \sideset{}{'}\sum_{n_1,\ldots, n_k\leq x} f(n_1,\ldots, n_k) = \sum_{n_1,\ldots, n_k\leq x} f(n_1,\ldots, n_k)+O(x^{-1+\epsilon}).
    $$
    By Proposition 3.1, we have
    $$
    \sum_{n_1,\ldots, n_k\leq x} f(n_1,\ldots, n_k)=\frac1{(2\pi i)^k}\int\displaylimits_{\kappa_k-iT_k}^{\kappa_k+iT_k}\cdots \int\displaylimits_{\kappa_1-iT_1}^{\kappa_1+iT_1}F(s_1,\ldots, s_k)\frac{x^{s_1+\cdots+s_k}}{s_1\cdots s_k} ds_1\cdots ds_k+O(\frac{\Lm}T).
    $$
    As in~\cite[(2.11)]{dlB}, we choose $\kappa_j=\kappa^j/\log x$ where $\kappa$ is a positive transcendental over the smallest field $K$ containing $\Q$ and all coefficients of $\ell_r({\bf s})$. This is to ensure any distinct linear forms that appear in evaluating the above integral have distinct values.

    For the general $\boldsymbol{\alpha}$, we obtain by applying Proposition 3.1 to the function $$F(s_1-\alpha_1, \ldots, s_k-\alpha_k)$$ and change of variables,
    $$
    \sum_{n_1,\ldots, n_k\leq x} {\bf n}^{\boldsymbol{\alpha}}f({\bf n})=\frac{x^{\boldsymbol{\alpha}\cdot {\bf 1}}}{(2\pi i)^k}\int\displaylimits_{\kappa_k-iT_k}^{\kappa_k+iT_k}\cdots \int\displaylimits_{\kappa_1-iT_1}^{\kappa_1+iT_1}\frac{F(s_1,\ldots, s_k)x^{s_1+\cdots+s_k}}{(s_1+\alpha_1)\cdots (s_k+\alpha_k)} ds_1\cdots ds_k+O(\frac{x^{\boldsymbol{\alpha}\cdot {\bf 1}}\Lm}T)
    $$
    with the same choice of $\kappa_j$.

    The steps of evaluating this integral are written in~\cite{dlB} and~\cite{BNP}. Here, we include the outline of the method. The idea is to evaluate the integral starting from the innermost by moving the line of integration to $[-\delta_1 - iT_1, -\delta_1+ iT_1]$ and using the residue theorem. We choose $\delta_1$ and subsequent $\delta_j$'s so that the zeta functions are evaluated close to the critical line and the mean value result (Lemma 3.2) are applied. Note that $\zeta(1+s)$ has a simple pole at $s=0$.  \\

    $\mathbf{1.}$ Consider a $(v+w+k+1)\times k$ matrix $L=(a_{ij})_{\substack{{1\leq i\leq v+w+k+1}\\{1\leq j\leq k}}}$ obtained by the coefficients of $\ell_r$, $r\leq v$ on the first $v$ rows, and ${\bf e}_j, \ j\in J(\boldsymbol{\alpha})$ on the next $w$ rows. Then we put the $k\times k$ identity matrix below. The last row is ${\bf 1}\in \C^k$. Let $L_0$ be the top $v+w$ rows of $L$. \\

    $\mathbf{2.}$ Choose a row with nonzero first entry (if exists, and it is not the last $k+1$ rows). Apply the row interchange to move this row to the first row $R_1$. Apply the row interchange to move all remaining rows $R_i$, $i\leq v+w$ with nonzero first entry to the rows directly below $R_1$.  By taking the dot product with $(0,-s_2,\ldots, -s_k)$ and dividing by the nonzero first entry of these rows, we obtain the locations of poles of the integrand with respect to the variable $s_1$. The poles are linear forms of $s_2,\ldots, s_k$. \\

    $\mathbf{3.}$ We find the residues at these poles. The sum of the residues is the value of the innermost integral up to an error caused by two horizontal sides and the left vertical side of the rectangle with vertices $-\delta_1-iT_1$, $\kappa_1-iT_1$, $\kappa_1+iT_1$, and $-\delta_1+iT_1$.  \\

    $\mathbf{4.}$ We need to substitute the chosen pole in place of $s_1$ into the integrand except for the function corresponding to the chosen pole. We apply the row replacement of $R_i$ by $R_i-\frac{a_{i1}}{a_{11}}R_1$ to all the rows below $R_1$. This will remove the leading nonzero entries of these rows except for $R_1$. Then we proceed on integrating the residue with respect to $s_2$.  \\

    $\mathbf{5.}$ Suppose that we are at the stage of the integral with respect to $s_j$, $j\geq 2$ and the last row shows $s_1+\cdots+ s_k$ after the substitutions of $s_1, \ldots, s_{j-1}$. The $k$ rows above the last row show the substitutions of $s_1, \ldots, s_k$ in terms of $s_j, \ldots, s_k$.\\

        {\bf Case 1}: The first nonzero entry in the last row is at the position $j_0\geq j$ and it is positive. \\

        We interchange the order of integration from $ds_j\cdots ds_k$ to
        $$ds_{j_0}ds_jds_{j+1}\cdots ds_{j_0-1}ds_{j_0+1}\cdots ds_k$$
        with an obvious modification when $j_0=j, k$. The poles with respect to $s_{j_0}$ are linear forms of $s_{j},\ldots, s_{j_0-1},s_{j_0+1},\ldots, s_k$. We find the residues at these poles. Move the line of integration to $[-\delta_{j_0}-iT_{j_0}, -\delta_{j_0}+iT_{j_0}]$. The error caused by two horizontal sides and the left vertical sides of the rectangle with vertices $-\delta_{j_0}-iT_{j_0}, \kappa_{j_0}-iT_{j_0}, \kappa_{j_0}+iT_{j_0},$ and $-\delta_{j_0}+iT_{j_0}$.\\

        {\bf Case 2}: The first nonzero entry in the last row is at the position $j_0\geq j$ and it is negative. \\

        We interchange the order of integration as in Case 1. We find the residues at these poles. Move the line of integration to $[\delta_{j_0}-iT_{j_0}, \delta_{j_0}+iT_{j_0}]$. The error caused by two horizontal sides and the right vertical sides of the rectangle with vertices $\delta_{j_0}-iT_{j_0}, \kappa_{j_0}-iT_{j_0}, \kappa_{j_0}+iT_{j_0},$ and $\delta_{j_0}+iT_{j_0}$.\\

        In Case 1 and Case 2, we modify $T_{j_0}$ by $T_{j_0}+O(1)$ to avoid the poles in the path of integration if necessary. This may result in an error of $O(\Lm/T)$ by Lemma 3.3. We may not have all poles enclosed by the rectangle. However, it is possible to include all poles on one side of the vertical line $\Re s = \kappa_{j_0}$ at a cost of an error $O(\Lm/T)$ due to $(s_{j_0}+\alpha_{j_0})^{-1}$ in the integrand and by Lemma 3.3. The interchange of the order of integration causes the reordering of the columns of $L$ to be consistent with the order of variables. The steps 2, 3, 4, and 5 are repeated until the following Case 3 appears. If Case 3 has never appeared, then the process ends with contour shifting and finding residues at poles about the last variable of consideration. \\

        {\bf Case 3}: The $j_0$-th position of the last row is zero for all $j_0\geq j$. \\

        The integrand at this stage is consisted of certain partial derivatives of $H$, derivatives of $\zeta$, and the denominator as a product of linear functions in $s_j, \ldots, s_k$. This is due to the successive substitutions of $s_1, \ldots s_{j-1}$ in terms of $s_j,\ldots, s_k$. Then $$s_1+\cdots + s_{j-1}=-s_j-\cdots -s_k$$ guarantees that for each $j_0\geq j$, there are at least two linear functions (one of them is $s_{j_0}+\alpha_{j_0}$) in the denominator containing nonzero coefficients of $s_{j_0}$. To see this, for each $j_0\geq j$, all substitutions that yields nonzero coefficients of $s_{j_0}$ are from the poles of zeta functions or their derivatives. We write $\kappa_j=\kappa_j(x)$, $\Lm=\Lm(x)$, and $T=T(x)$ (Here, $T$ will be a fixed power of $x$). We apply the method of~\cite[Lemma 20]{BNP}, let $I(x)$ be the integral of the integrand as follows:
        $$
        I(x)=\int\displaylimits_{\kappa_k(x)-iT_k(x)}^{\kappa_k(x)+iT_k(x)}\cdots \int\displaylimits_{\kappa_j(x)-iT_j(x)}^{\kappa_j(x)+iT_j(x)} ( \cdots ) \ ds_j\cdots ds_k
        $$
        Then for sufficiently large $x$, that is, $x\geq x_0$, the contour changes (first horizontal then vertical) to produce $I(2x)$ contribute the errors of $O(\Lm(x)/T(x))$. This gives $|I(2x)-I(x)|=O(\Lm(x)/T(x))$. Thus, $I(2^mx_0)$ forms a Cauchy sequence and it converges to a number $I$ as $m\rightarrow \infty$. Hence, we obtain for sufficiently large $x$ and $2^{m_0}x_0\leq x < 2^{m_0+1}x_0$,
        $$
        |I(x)-I|=O\left(\sum_{m=m_0}^{\infty} \frac{\Lm(2^m x_0)}{T(2^m x_0)}\right)=O\left(\frac{\Lm(x)}{T(x)}\right).
        $$

    $\mathbf{6.}$ The procedure stops when all variables are considered and Step 5 cannot be repeated. During this process, the matrix $L$ goes through row replacements, row interchanges of the first $v+w$ rows, and column interchanges. \\

    We lay out all matrices appearing as results of these row or column operations. There are finitely many such matrices. Let $C\geq 1$ be the maximum of the absolute values of all entries of all matrices in this layout. Let $0<c\leq 1$ be the minimum of the absolute values of all nonzero entries of all matrices in this layout. We require $\kappa<c^2/(2C^2+2)$. This choice of $\kappa$ guarantees that
    $$
    c^2\kappa^j>C^2(\kappa^{j+1}+\kappa^{j+2}+\cdots).
    $$Then for each $j_0\geq j$ and the vertical segment $[\kappa_{j_0}-iT_{j_0}, \kappa_{j_0}+iT_{j_0}]$ of integration, we can decide whether a pole $$s_{j_0}= a_j s_j+\ldots + a_{j_0-1}s_{j_0-1}+a_{j_0+1}s_{j_0+1}+\ldots+a_ks_k$$
    (an obvious modification when $j_0=j, k$) is on the left ($j<j_0$ and $a_j<0$) or right side ($j< j_0$ and $a_j>0$) of the segment from the sign of $a_j$. If $j=j_0$, then the pole does not have $a_js_j$ term and the pole is on the left side of the segment.\\

    As in~\cite[Theorem 12.3]{Ti}, we use Lemma 3.2 and find the contribution of the contour changes to $[-\delta_{j_0}-iT_{j_0}, -\delta_{j_0}+iT_{j_0}]$ or $[\delta_{j_0}-iT_{j_0}, \delta_{j_0}+iT_{j_0}]$ in Step 5 Case 1 and 2. We enlarge $C$ so that $\min\limits_{\substack{{1\leq j\leq k}\\{\alpha_j>0}}} \alpha_j  >\frac1{2kC}$, if necessary. This is to ensure the poles are only from the zeta functions or the linear functions on the denominator. We take $\delta_j = \frac1{2kC}+\epsilon$ for each $j$. The errors due to the integrals over each horizontal side are
    $$
    \ll (T^{\frac v{6k}-1}x^{-\frac1{2kC}} + T^{-1})(xT)^{\epsilon}.
    $$
    The errors due to the integrals over each vertical side are
    $$
    \ll x^{-\frac1{2kC}}T^{\frac1{6k}\max(v-4,0)}(xT)^{\epsilon}.
    $$
    Noting that $\frac{v-6k}{6k}<\frac1{6k}\max(v-4,0)$, we take $T=x^{\frac{v_*}C}$ to balance the error terms $T^{-1}(xT)^{\epsilon}$ and $x^{-\frac1{2kC}}T^{\frac1{6k}\max(v-4,0)}(xT)^{\epsilon}$, then the combined errors become
    $$
    \ll x^{\boldsymbol{\alpha}\cdot {\bf 1}-\frac{v_*}C+\epsilon}.
    $$

    During the process from {\bf 1} to {\bf 6} on these residues, a power of $\log x$ is obtained whenever we take the derivative of the power of $x$ in the integrand. Here, $v+w-\ell$ is the maximal time we take the derivative of powers of $x$.
    \end{proof}
    {\bf Remark. } If the entries of matrix $L$ and all $\alpha_i$'s are integers, then we have the following explicit bounds for $C$ and $c$. Let $\|L\|$ be the maximum of absolute values of entries of $L$. Then
    $$
    C\leq {\|L\|}^k k^{\frac k2} \ \textrm{ and } \ c\geq \|L\|^{-(k-1)} (k-1)^{-\frac {k-1}2}.
    $$
    To see this, suppose that the entry corresponding to $C$ is obtained at $(i_0,j_0)$ position with $i_0\geq i$ and $j_0\geq j$. Up to this stage, $i-1$ pivot positions are determined and $(i-1)$-th pivot position is at $(i-1, j-1)$-th entry. Label the rows of $L_0$ which are selected to have these $i-1$ pivot positions as $R_1, \ldots, R_{i-1}$. Apply a column interchange to move $(i_0,j_0)$-th entry to $(i_0,j)$-th entry and relabel the rows a $R_1,\ldots, R_{i-1}$ after the column interchange. Then apply a row interchange to move it to $(i,j)$-th entry. Label the row containing the pivot at $(i,j)$-th entry as $R_i$. Insert ${\bf e}_i$'s ($i\leq k$) to fill up the missing pivot positions so that the pivot at $(i,j)$-th entry is moved to $(j,j)$-th entry and all $k$ pivot positions are determined. We consider a linear system with an augmented matrix with rows
    $$R_1 + 0 \ {\bf e}_{k+1}, \ldots, R_{i-1}+0 \ {\bf e}_{k+1}, R_i + 1 \ {\bf e}_{k+1},$$
    and ${\bf e}_i$'s used for filling up the pivots. The solution to this system has $s_{j+1}=\cdots=s_k=0$ and $|s_j|=\frac1C$ since the row operations between $R_1,\ldots, R_i$ do not change $1 \ {\bf e}_{k+1}$. Let $B$ be the coefficient matrix of this system. By Cram\'{e}r's rule
    $$
    s_j=\frac{|B'|}{|B|}
    $$
    where $B'$ is $(k-1)\times (k-1)$ minor of $B$ obtained by removing $j$-th row and column.  Since $s_j$ is nonzero and $L$ is integer matrix, we have $|B'|\geq 1$.  By Hadamard's determinant bound we have $|B|\leq {\|L\|}^k k^{\frac k2}$. This yields the upper bound for $C$. The lower bound for $c$ is obtained by the similar method with $|B|\geq 1$ and $|B'|\leq \|L\|^{k-1} (k-1)^{\frac{k-1}2}$. In case $L$ is a zero-one matrix, Hadamard's determinant bound is stronger. We have
    $$
    C\leq \frac{(k+1)^{\frac{k+1}2}}{2^k} \ \textrm{ and } \ c\geq \frac{2^{k-1}}{k^{\frac k2}}.
    $$

    We are able to strengthen Theorem 2.2 to the full asymptotic with a power saving error term.
    \begin{theorem}
    We have
    $$
    \sum_{\substack{{a_1,\ldots, a_v\leq x}\\{G\textrm{-wise coprime}}\\{\forall_{i\leq k}, \ \prod_{j\in A_i}a_j \leq x}}} \frac1{a_1\cdots a_v}=P_v(\log x)+O(x^{-\theta+\epsilon})
    $$
    where $P_v$ has degree $v$ with the leading coefficient $\rho(G)\mathrm{vol}(D)$ in view of Theorem 2.2 and
    $$
    \theta=\frac{2^k}{(k+1)^{\frac{k+1}2}}\cdot \frac3{6k+\max(v-4,0)}.
    $$
    \end{theorem}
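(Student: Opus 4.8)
The plan is to recognize the left-hand side as $\sum_{n_1,\dots,n_k\le x}f(n_1,\dots,n_k)$, where $f$ is exactly the arithmetic function of (13) attached to the graph $G$ and the subsets $A_i$, $i\le k$, and then to apply Proposition 3.2. First I would check hypothesis i): by Lemma 3.1 the Dirichlet series of $f$ factors as $H(\mathbf s)\prod_{r\le v}\zeta\bigl(1+\ell_r(\mathbf s)\bigr)$ with $H$ absolutely convergent for $\Re(s_i)>-\tfrac1{2k}$, and each linear form $\ell_r(\mathbf s)=\sum_{i:\,r\in A_i}s_i$ is nonzero because $\bigcup_{i\le k}A_i=V$ forces every vertex $r$ to lie in some $A_i$.

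The substance of the argument is verifying hypothesis ii), i.e.\ the divisor-type bound (17). Since $f\ge0$, fix an index $i$ and estimate $\sum_{n_j,\,j\ne i}f(\mathbf n)\prod_{j\ne i}n_j^{-\kappa_j}$ by dropping the $G$-wise coprimality condition and summing over all $v$-tuples $(a_1,\dots,a_v)$ with $\prod_{r\in A_i}a_r=n_i$. The weight of such a tuple is $\prod_{r\le v}a_r^{-1-c_r}$ with $c_r=\sum_{i'\ne i:\,r\in A_{i'}}\kappa_{i'}\ge0$. The factors with $r\in A_i$ contribute at most $\prod_{r\in A_i}a_r^{-1}=1/n_i$, and the number of admissible choices of these factors is $\tau_{|A_i|}(n_i)\le\tau_v(n_i)$; the factors with $r\notin A_i$ are summed over all of $\N$ and contribute $\prod_{r\notin A_i}\zeta(1+c_r)$, where now $c_r>0$ since such an $r$ lies in $A_{i'}$ for some $i'\ne i$. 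Using $\zeta(1+c_r)\le1+c_r^{-1}\le2\kappa_{i'}^{-1}$ for a suitable $i'\ne i$ with $r\in A_{i'}$, and collecting the at most $v$ such factors, one obtains (17) with a constant $M$ depending only on $k$ and $v$.

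Having verified both hypotheses, I would apply Proposition 3.2 with $\boldsymbol\alpha=\mathbf 0$. Then $w=|J(\mathbf 0)|=k$, and since $\{\mathbf e_j^*:j\le k\}$ already spans $\C^k$ we have $\ell=k$, so the main term is a polynomial of degree $v+w-\ell=v$; comparing with Theorem 2.2 (which already pins the coefficient of $\log^v x$) forces its leading coefficient to be $\rho(G)\mathrm{vol}(D)$. Finally, the matrix $L$ assembled from the $\{0,1\}$-valued coefficient vectors of the $\ell_r$, the standard vectors $\mathbf e_j$, the $k\times k$ identity block, and the row $\mathbf 1$ is a zero-one matrix, so the Remark following Proposition 3.2 gives $C\le(k+1)^{(k+1)/2}/2^k$; with $v_*=\tfrac3{6k+\max(v-4,0)}$ the exponent $v_*/C$ in the error term of Proposition 3.2 then satisfies $v_*/C\ge\theta$ with $\theta=\frac{2^k}{(k+1)^{(k+1)/2}}\cdot\frac3{6k+\max(v-4,0)}$, so the error is $O(x^{-\theta+\epsilon})$. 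The main obstacle is the bound (17): one must make sure that every variable $a_r$ with $r\notin A_i$ genuinely picks up a strictly positive exponent from some $\kappa_{i'}$ with $i'\ne i$, which is precisely where the hypothesis $\bigcup_{i\le k}A_i=V$ enters.
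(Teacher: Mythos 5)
Your proposal is correct and follows essentially the same route as the paper: identify the sum with $\sum_{n_1,\ldots,n_k\le x}f(n_1,\ldots,n_k)$ for the function $f$ of (13), invoke Lemma 3.1 for hypothesis i), apply Proposition 3.2 with $\boldsymbol{\alpha}={\bf 0}$ so that $w=\ell=k$ and the degree is $v$, and use the zero-one matrix Remark to get $C\le (k+1)^{\frac{k+1}2}/2^k$, hence the stated $\theta$. In fact you supply more detail than the paper does, since your verification of the divisor bound (17) (dropping coprimality, bounding the $r\in A_i$ factors by $\tau_v(n_i)/n_i$ and the $r\notin A_i$ factors by $\zeta(1+c_r)\ll\kappa_{i'}^{-1}$ using $\cup_i A_i=V$) is left implicit in the paper's one-line proof.
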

    \begin{proof}
    This follows from Lemma 3.1 and Proposition 3.2. The arithmetic function
    $$
    f(n_1,\ldots, n_k)=\sum_{\substack{{\forall_r, a_r\in \N}\\{G\textrm{-wise coprime}}\\{\forall_{i\leq k}, \ \prod_{j\in A_i} a_j = n_i}}}\frac1{a_1\cdots a_v}
    $$
    in (13) satisfies the conditions of Proposition 3.2. We have $\boldsymbol{\alpha}={\bf 0}$ so that $|J(\boldsymbol{\alpha})|=k$ and $w=\ell=k$. Hence, the result follows.
    \end{proof}
    \subsection{Proof of Theorem 1.1}
    (6) and (7) follow from the analysis of Section 3.2 and Theorem 3.1 with $v=2^k-1$ and $v=2^k-2$ respectively. Thus, for $k\geq 3$, (6), (7), and (8) hold with
    $$
    \theta^{(1)}_k=\theta^{(3)}_k=\frac{2^k}{(k+1)^{\frac{k+1}2}}\cdot \frac 3{2^k+6k-5}, \ \ \theta^{(2)}_k=\frac{2^k}{(k+1)^{\frac{k+1}2}}\cdot \frac 3{2^k+6k-6}.
    $$
    We have $w=\ell=k$ in these two cases (6) and (7). The main term of (8) is obtained by $\boldsymbol{\alpha}\cdot {\bf 1}=k$, $v=2^k-1$, $w=0$, $\ell=k$, and Proposition 3.2. \\

    By Section 3.2, the leading coefficients of $P^{(1)}_{2^k-1}$ and $P^{(2)}_{2^k-2}$ are $c_k=\rho(G_k)\mathrm{vol}(D_k)$ and $c_k^{(2)}=(2^k-1)c_k$ respectively. Computations of $\rho(G_k)$ and $\mathrm{vol}(D_k)$ are continued on Section 4. The leading coefficient of $P^{(3)}_{2^k-k-1}$ is positive in view of~\cite[Theorem 1.3]{K} or~\cite[Theorem 2]{dlB}. We have $\ell=k$ for all three cases (6), (7), and (8). \\

    \section{Computations of $\rho(G_k)$ and $\mathrm{vol}(D_k)$}
    \subsection{The number $\rho(G_k)$}
    In~\cite[(2.5)]{HLT}, it was proved that
    $$
    \sum_{n\leq x} \frac{\alpha_k(n)}n \leq S_k(x)
    $$
    where $\alpha_k(n)=\sum_{\substack{{n_1,\ldots, n_k\in\N}\\{[n_1,\ldots,n_k]=n}}} 1$. Using $\alpha_k$ is multiplicative and $\alpha_k(p)=2^k-1$ for each prime $p$, they applied~\cite[Theorem 2.2]{HLT} to prove
    $$
    \sum_{n\leq x}\frac{\alpha_k(n)}n=\frac{C_k}{(2^k-1)!}\log^{2^k-1}x+O(\log^{2^k-2}x)
    $$
    where
    $$
    C_k=\prod_{p\in\mathcal{P}}\left(1-\frac1p\right)^{2^k-1} \sum_{\nu=0}^{\infty} \frac{(\nu+1)^k-\nu^k}{p^{\nu}}.
    $$
    In view of Theorem 2.2, we have
    $$
    \sum_{n\leq x}\frac{\alpha_k(n)}n=\sum_{\substack{{a_1,\ldots, a_{v_k}\leq x}\\{G_k\textrm{-wise coprime}}\\{a_1\cdots a_{v_k}\leq x}}}\frac1{a_1\cdots a_{v_k}}=\rho(G_k)\textrm{vol}(T_k)\log^{2^k-1}x+O(\log^{2^k-2}x)
    $$
    where
    $$
    T_k=\{(u_1,\ldots, u_{v_k})\in [0,\infty)^{v_k} \ | \ u_1+\cdots +u_{v_k}\leq 1\}.
    $$
    Since $\textrm{vol}(T_k)=\frac1{(2^k-1)!}$, we have
    $$
    \rho(G_k)=C_k=\prod_{p\in\mathcal{P}}\left(1-\frac1p\right)^{2^k-1} \sum_{\nu=0}^{\infty} \frac{(\nu+1)^k-\nu^k}{p^{\nu}}.
    $$
    Thus, we are able to evaluate $\rho(G_k)$ without knowing precise information about $G_k$.

    We have three expressions for $\rho(G_k)$ from~\cite{RH},~\cite{H2}, and~\cite{HLT}. These suggest the identities for $|x|<1$,
    $$
    \sum_{F\subseteq E_k} (-1)^{|F|}x^{v(F)}=\sum_{m=0}^{v_k} i_m(G_k)\left(1-x\right)^{v_k-m}x^m=(1-x)^{2^k-1}\sum_{\nu=0}^{\infty} ( (\nu+1)^k-\nu^k)x^{\nu}.
    $$
    Analyzing the proofs in these papers, the values at $1/p$ for each prime $p$ are identical. Thus, we see that the above identities are indeed true.

    We may recover some information about $G_k$ by the last expression. For example, we obtain the number of edges in $G_k$ by equating the coefficients of $x^2$, thus
    $$
    -|E_k|=(3^k-2^k) - (2^k-1)^2 + \binom{2^k-1}2=3^k-2^{k-1}(2^k+1)
    $$
    so that $|E_2|=1$, $|E_3|=9$, and $|E_4|=55$. Also, we have
    $$
    \sum_{\nu=0}^{\infty} \nu^k x^{\nu} = \frac1{1-x} \sum_{m=1}^k S(k,m) m !\left(\frac x{1-x}\right)^m
    $$
    where $S(k,m)$ is the Stirling number of the second kind. Then
    $$
    \sum_{\nu=0}^{\infty} ( (\nu+1)^k-\nu^k)x^{\nu}=\frac1x\sum_{m=1}^k S(k,m) m !\left(\frac x{1-x}\right)^m.
    $$
    By $\frac1x= 1+ \left(\frac x{1-x}\right)^{-1}$, we obtain $i_m(G_k)$ in terms of the Stirling numbers. To see this, let $t=\frac x{1-x}$, we have
    $$
    \sum_{m=0}^{v_k} i_m(G_k) \ t^m=\left(1+\frac1t\right)\sum_{m=1}^k S(k,m) m!  \ t^m.
    $$
    Hence,
    $$
    i_m(G_k)=S(k,m)m! + S(k,m+1)(m+1)!.
    $$
    Since the above vanishes when $m>k$, we have
    $$
    \sum_{F\subseteq E_k} (-1)^{|F|}x^{v(F)}=\sum_{m=0}^k \big(S(k,m)m! + S(k,m+1)(m+1)!\big)\left(1-x\right)^{v_k-m}x^m.
    $$
    Therefore, we are able to write $f_{G_k}(p^a)$ in terms of the Stirling numbers and binomial coefficients. The following are the polynomials used in the Euler product of $\rho(G_k)$ obtained after expanding the right side.
    \begin{align*}
    Q_{G_2}(x)&=1-x^2,\\
    Q_{G_3}(x)&=1-9x^2+16x^3-9x^4+x^6,\\
    Q_{G_4}(x)&=1-55x^2+320x^3-891x^4+1408x^5-1155x^6+1155x^8-1408x^9\\
    &+891x^{10}-320x^{11}+55x^{12}-x^{14}.
    \end{align*}
    \subsection{The polytope $D_k^*$}
    As noted in Section 3.1, the volume of polytope defined by a finite set of inequalities can be computed by a SageMath worksheet. Since $\mathrm{vol}(D_k)=\mathrm{vol}(D_k^*)/(2^k-1)$, we use the following code to generate a SageMath worksheet for $D_k^*$.
    \begin{lstlisting}[title=A python code generating a SageMath worksheet for $D_k^*$]
file2 = open('Polytope.txt', 'w+')
k=input()
s=[[0 for y in range(2**k-1)] for x in range(k+2**k-2)]
for i in range(k): #Giving hyperbolic constraints.
    s[i][0]=1
    for j in range(2**k-1):
        if j&(1<<i):
            s[i][j]=-1  #When i-th bit from the right is 1, assign -1.
for z in range(k,k+2**k-2): #Restricting variables to be nonnegative.
    s[z][z-k+1]=1
file2.write('P=Polyhedron(ieqs=')
file2.write(str(s))
file2.write(')')
file2.write('\n')
file2.write('P.volume()')
file2.close()
\end{lstlisting}
    The output in case $k=3$ is
    \begin{lstlisting}
P=Polyhedron(ieqs=[[1, -1, 0, -1, 0, -1, 0],
[1, 0, -1, -1, 0, 0, -1],
[1, 0, 0, 0, -1, -1, -1],
[0, 1, 0, 0, 0, 0, 0],
[0, 0, 1, 0, 0, 0, 0],
[0, 0, 0, 1, 0, 0, 0],
[0, 0, 0, 0, 1, 0, 0],
[0, 0, 0, 0, 0, 1, 0],
[0, 0, 0, 0, 0, 0, 1]])
P.volume()
    \end{lstlisting}
    The output in case $k=4$ is
    \begin{lstlisting}
P=Polyhedron(ieqs=[
[1, -1, 0, -1, 0, -1, 0, -1, 0, -1, 0, -1, 0, -1, 0],
[1, 0, -1, -1, 0, 0, -1, -1, 0, 0, -1, -1, 0, 0, -1],
[1, 0, 0, 0, -1, -1, -1, -1, 0, 0, 0, 0, -1, -1, -1],
[1, 0, 0, 0, 0, 0, 0, 0, -1, -1, -1, -1, -1, -1, -1],
[0, 1, 0, 0, 0, 0, 0, 0, 0, 0, 0, 0, 0, 0, 0],
[0, 0, 1, 0, 0, 0, 0, 0, 0, 0, 0, 0, 0, 0, 0],
[0, 0, 0, 1, 0, 0, 0, 0, 0, 0, 0, 0, 0, 0, 0],
[0, 0, 0, 0, 1, 0, 0, 0, 0, 0, 0, 0, 0, 0, 0],
[0, 0, 0, 0, 0, 1, 0, 0, 0, 0, 0, 0, 0, 0, 0],
[0, 0, 0, 0, 0, 0, 1, 0, 0, 0, 0, 0, 0, 0, 0],
[0, 0, 0, 0, 0, 0, 0, 1, 0, 0, 0, 0, 0, 0, 0],
[0, 0, 0, 0, 0, 0, 0, 0, 1, 0, 0, 0, 0, 0, 0],
[0, 0, 0, 0, 0, 0, 0, 0, 0, 1, 0, 0, 0, 0, 0],
[0, 0, 0, 0, 0, 0, 0, 0, 0, 0, 1, 0, 0, 0, 0],
[0, 0, 0, 0, 0, 0, 0, 0, 0, 0, 0, 1, 0, 0, 0],
[0, 0, 0, 0, 0, 0, 0, 0, 0, 0, 0, 0, 1, 0, 0],
[0, 0, 0, 0, 0, 0, 0, 0, 0, 0, 0, 0, 0, 1, 0],
[0, 0, 0, 0, 0, 0, 0, 0, 0, 0, 0, 0, 0, 0, 1]])
P.volume()
    \end{lstlisting}
    Running these on SageMath, we obtain $\mathrm{vol}(D_3^*)=11/480$ and $\mathrm{vol}(D_4^*)=739/25830604800$. Thus, $\mathrm{vol}(D_3)=11/3360$ and $\mathrm{vol}(D_4)=739/387459072000$.
    \subsection{The polytope $D_k^{**}$}
    Recall that
    $$
    D^{**}_k=\left\{(t_i)_{\substack{{i\leq {v_k}}\\{i\neq 2^t}}}\in [0,\infty)^{2^k-k-1} \ \bigg\vert  \forall_{i\leq k}, \ \sum_{j\in A_{k,i}-\{2^{i-1}\}}t_j \leq 1 \right\}.
    $$
    The volume of $D^{**}$ is $1/(2^k-k-1)$ times the volume of
    $$
    D^{***}_k=\left\{(t_i)_{\substack{{i\leq {v_k-1}}\\{i\neq 2^t}}}\in [0,\infty)^{2^k-k-2} \ \bigg\vert  \forall_{i\leq k}, \ \sum_{j\in A_{k,i}-\{2^{i-1}, v_k\}}t_j \leq 1 \right\}.
    $$
    We use the following code to generate a SageMath worksheet for $D_k^{***}$.
    \begin{lstlisting}[title=A python code generating a SageMath worksheet for $D_k^{***}$]
file2 = open('Polytope.txt', 'w+')
k=input()
s=[[0 for y in range(2**k-1)] for x in range(2*k+2**k-2)]
for i in range(k): #Giving hyperbolic constraints.
    s[i][0]=1
    for j in range(2**k-1):
        if j&(1<<i):
            s[i][j]=-1
            #When i-th bit from the right is 1, assign -1.
for z in range(k,k+2**k-2): #Restricting variables to be nonnegative.
    s[z][z-k+1]=1
for u in s:
    for v in range(k):
        del u[2**v-v] #Delete positions 2^j.
rows = [i for i in s if set(i)!={0}]
file2.write('P=Polyhedron(ieqs=')
file2.write(str(rows))
file2.write(')')
file2.write('\n')
file2.write('P.volume()')
file2.close()
    \end{lstlisting}
    The output in case $k=3$ is
 \begin{lstlisting}
P=Polyhedron(ieqs=[[1, -1, -1, 0], [1, -1, 0, -1],
[1, 0, -1, -1], [0, 1, 0, 0], [0, 0, 1, 0], [0, 0, 0, 1]])
P.volume()
\end{lstlisting}
    The output in case $k=4$ is
\begin{lstlisting}
P=Polyhedron(ieqs=[
[1, -1, -1, 0, -1, -1, 0, -1, 0, -1, 0],
[1, -1, 0, -1, -1, 0, -1, -1, 0, 0, -1],
[1, 0, -1, -1, -1, 0, 0, 0, -1, -1, -1],
[1, 0, 0, 0, 0, -1, -1, -1, -1, -1, -1],
[0, 1, 0, 0, 0, 0, 0, 0, 0, 0, 0],
[0, 0, 1, 0, 0, 0, 0, 0, 0, 0, 0],
[0, 0, 0, 1, 0, 0, 0, 0, 0, 0, 0],
[0, 0, 0, 0, 1, 0, 0, 0, 0, 0, 0],
[0, 0, 0, 0, 0, 1, 0, 0, 0, 0, 0],
[0, 0, 0, 0, 0, 0, 1, 0, 0, 0, 0],
[0, 0, 0, 0, 0, 0, 0, 1, 0, 0, 0],
[0, 0, 0, 0, 0, 0, 0, 0, 1, 0, 0],
[0, 0, 0, 0, 0, 0, 0, 0, 0, 1, 0],
[0, 0, 0, 0, 0, 0, 0, 0, 0, 0, 1]])
P.volume()
\end{lstlisting}
Running these on SageMath, we obtain $\mathrm{vol}(D_3^{**})=\mathrm{vol}(D_3^{***})/4=1/16$ and $\mathrm{vol}(D_4^{**})=\mathrm{vol}(D_4^{***})/11=299/479001600$.
       \flushleft


\begin{thebibliography}{99}
    \bibitem[AH]{RH} J. Arias-de-Reyna, R. Heyman, \emph{Counting tuples restricted by pairwise coprimality conditions}, J. Integer. Seq, {\bf 18} (2015), Article 15.10.4.
    \bibitem[BNP]{BNP} M. Balazard, M. Naimi, Y. -F. S. P\'{e}termann, \emph{\'{E}tude d'une somme arithm\'{e}tique multiple li\'{e}e \`{a} la fonction de M\"{o}bius}, Acta Arith. {\bf 132}, (2008), pp. 245-298.
    \bibitem[dlB]{dlB} R. de la Bret\`{e}che, \emph{Estimation de sommes multiples de fonctions arithm\'{e}tiques}, Compos. Math. {\bf 128} (2001), pp. 261-298.
    \bibitem[EST]{EST} D. Essouabri, C. Salinas Zavala, L. T\'{o}th, \emph{Mean values of multivariable multiplicative functions and applications to the average number of cyclic subgroups and multivariable averages associated with the LCM function}, J. Number Theory, in press, available at \url{https://www.sciencedirect.com/science/article/pii/S0022314X2100281X}
    \bibitem[FF]{FF1} J. Fern\'{a}ndez, P. Fern\'{a}ndez, \emph{On the probability distribution of the gcd and lcm of $r$-tuples of integers}, available at \url{https://arxiv.org/abs/1305.0536}
    \bibitem[H1]{H1} J. Hu, \emph{The probability that random positive integers are $k$-wise relatively prime}, Int. J. Number Theory, {\bf 9}. 5(2013), pp. 1263-1271.
    \bibitem[H2]{H2} J. Hu, \emph{Pairwise relative primality of positive integers}, available at \url{https://arxiv.org/abs/1406.3113}
    \bibitem[HT]{HT} T. Hilberdink, L. T\'{o}th, \emph{On the average value of the least common multiple of $k$ positive integers}, J. Number Theory, {\bf 169} (2016), pp. 327-341.
    \bibitem[HLT]{HLT} T. Hilberdink, F. Luca, L. T\'{o}th, \emph{On certain sums concerning the gcd's and lcm's of $k$ positive integers}, Int. J. Number Theory, {\bf 16}. no. 1(2020), pp. 77-90.
    \bibitem[K]{K} S. Kim, \emph{On the distribution of lcm of $k$-tuples and related problems}, submitted, available at \url{https://arxiv.org/abs/2102.05480}
    \bibitem[L]{L} H. -Q. Liu, \emph{On Euler's function}, P. Roy. Soc. Edinb. A, {\bf 146A} (2016), pp 769-775.
    \bibitem[Ti]{Ti} E. C. Titchmarsh, \emph{The Theory of the Riemann Zeta-function 2nd Edition}, Oxford Science Publications.
    \bibitem[To]{T} L. T\'{o}th, \emph{The probability that $k$ positive integers are pairwise relatively prime}, Fibonacci Quart. {\bf 40} (2002), pp. 13-18.
    \bibitem[TZ]{TZ} L. T\'{o}th, W. Zhai, \emph{On the error term concerning the number of subgroups of the groups $\Z_m\times \Z_n$ with $m, n\leq x$}, Acta Arith. {\bf 183}, (2018), pp. 285-299.

    \end{thebibliography}
\end{document}